\newtheorem{Thm}{Theorem}[section]
\newtheorem{Conj}[Thm]{Conjecture}
\newtheorem{Prop}[Thm]{Proposition}
\newtheorem{Def}[Thm]{Definition}
\newtheorem{Def/Thm}[Thm]{Definition/Theorem}
\newtheorem{Lemma}[Thm]{Lemma}
\theoremstyle{remark}
\newtheorem{Rmk}[Thm]{Remark}
\numberwithin{equation}{subsection}
\newcommand{\ot }{\otimes}
\newcommand{\ra }{\rightarrow}
\newcommand{\lra }{\longrightarrow}
\newcommand{\Hom }{{\mathrm{Hom}}}
\newcommand{\Spec}{{\mathrm{Spec}}}
\newcommand{\Pic}{{\mathrm{Pic}}}
\newcommand{\Sym}{{\mathrm{Sym}}}
\newcommand{\cP}{{\mathcal{P}}}
\newcommand{\cC}{{\mathcal{C}}}
\newcommand{\G}{{\bf G}}
\newcommand{\bT}{{\bf T}}
\newcommand{\PP }{{\mathbb P}}
\newcommand{\QQ }{{\mathbb Q}}
\newcommand{\CC }{{\mathbb C}}
\newcommand{\ZZ }{{\mathbb Z}}
\newcommand{\one }{{\mathbbm 1}}
\newcommand{\ke }{{\varepsilon }}
\newcommand{\kg }{{\gamma}}
\newcommand{\kl }{{\lambda}}
\newcommand{\QGraphok}{QG_{0,0|k,\beta}^{\theta, \ke} ([W/\G])}
\newcommand{\vir}{\mathrm{vir}}
\newcommand{\aff}{\mathrm{\!\!aff}}
\newcommand{\nov}{\mathrm{nov}}
\newcommand{\eb}{ev_\bullet}
\newcommand{\WmodG}{W/\!\!/\G}
\newcommand{\WmodtG}{W/\!\!/_{\!\theta}\G}
\newcommand{\T}{{\bf T}}
\newcommand{\lan}{\langle}
\newcommand{\ran}{\rangle}
\newcommand{\lla}{\langle\!\langle}
\newcommand{\rra}{\rangle\!\rangle}
\newcommand{\id}{\mathrm{id}}
\newcommand{\dsJ}{\mathds{J}}
\newcommand{\dsS}{\mathds{S}}
\newcommand{\dsI}{\mathds{I}}
\newcommand{\uE}{\underline{E}}
\newcommand{\uke}{\upepsilon}
\newcommand{\uev}{\underline{ev}}
\newcommand{\upi}{\underline{\pi}}
\begin{document}

\title{Big $I$-functions} 
\dedicatory {To Professor Shigeru Mukai, on the occasion of his $60^{th}$ birthday.}
\begin{abstract} 
We introduce a new big $I$-function for  certain GIT quotients $\WmodG$ using the quasimap graph space 
from infinitesimally pointed $\PP ^1$ to the stack quotient $[W/\G]$.
This big $I$-function is expressible by the small $I$-function introduced in \cite{CK, CKM}.
The $I$-function conjecturally generates the Lagrangian cone of 
Gromov-Witten theory for $\WmodG$ defined by Givental. We prove the conjecture when
$\WmodG$ has a torus action with good properties.
\end{abstract}

\author{Ionu\c t Ciocan-Fontanine}
\noindent\address{School of Mathematics, University of Minnesota, 206 Church St. SE,
Minneapolis MN, 55455, and\hfill
\newline \indent School of Mathematics, Korea Institute for Advanced Study,
85 Hoegiro, Dongdaemun-gu, Seoul, 02455, Korea}
\email{ciocan@math.umn.edu}

\author{Bumsig Kim}
\address{School of Mathematics, Korea Institute for Advanced Study,
85 Hoegiro, Dongdaemun-gu, Seoul, 02455, Korea}
\email{bumsig@kias.re.kr}

\maketitle

%%%%%%%
%%%%%%%
%%%%%%%
%%%%%%%
%%%%%%%
%%%%%%%
%%%%%%%

\section{Introduction}

Let $X$ be a nonsingular quasi-projective variety with a torus $\T$-action such that the $\T$-fixed
locus $X^{\T}$ is projective. We allow $\T$  to be the trivial group. The $\T$-equivariant rational Gromov-Witten theory for $X$
is encoded in the genus $0$ prepotential $F$, i.e., the generating function of gravitational Gromov-Witten invariants
defined by the integration of psi-classes and pullbacks of cohomology classes of target $X$
against the virtual fundamental classes of the moduli space of $k$ pointed, genus $0$,  numerical class $\beta$ stable maps to $X$.

Givental shows that the graph of the formal $1$-form $dF$ is a Lagrangian cone in a suitably defined 
infinite dimensional symplectic space and the cone is generated by the $J$-function (see \cite{Giv_Sym}). The big $J$-function for $X$ is 
a generating function of genus $0$ GW-invariants with gravitational insertions at one point, and any number of primary insertions. It is a difficult problem to compute the $J$-function
in general. 
In the case when $X$ has a GIT presentation $X=\WmodG$ with $W$ affine, 
there is a replacement of the $J$-function. It is the so-called $I$-function, introduced in \cite{CK, CKM} as a generalization of Givental's small $I$-function for toric targets.  
While it is shown in \cite{CKg0} that $I$ and $J$ are related via generalized Mirror Theorems,
the big $I$-function is equally difficult to compute. 
The purpose of this paper is to remedy this situation by introducing a new version of $I$-functions (for the same kind of GIT targets). 
This new function, which we denote by $\dsI$, can be computed explicitly in closed form in many cases, and 
the $J$-function is obtained from it via the Birkhoff factorization procedure, as given in \cite{CG}.
%(In this paper we will not discuss the \lq\lq old" big $I$-function.)

The precise GIT set-up is as follows. Let $W$ be an affine variety with a linear right action of a reductive algebraic group $\G$.
For any rational character $\theta$ of $\G$, denote by 
 $W^{ss}(\theta)$ the semistable locus of $W$ with respect to $\theta$.
Assume that $W^{ss}(\theta )$ is nonsingular, 
$W$ has at worst l.c.i singularities, and $\G$ acts on $W^{ss}(\theta)$ freely (however, see \cite{CCK} for allowing finite
non-trivial stabilizers).

Given such a triple  $(W, \G, \theta)$, there is a relative compactification of the space of maps from $\PP ^1$ to $\WmodG$ 
of given numerical class $\beta$ (see Definition \ref{numerical class} for the notion of numerical class),
keeping the domain curve $\PP ^1$ but allowing maps $\PP ^1\ra [W/\G]$ to the stack quotient. The \lq\lq compactification"
is called the quasimap graph space and defined to be
\[ QG_{0,0,\beta}(\WmodG) := \{ f \in \mathrm{Hom}(\PP ^1, [W/\G]) :  f^{-1}(\WmodG) \ne \emptyset, \beta_f = \beta \}. \]
It is an algebraic space proper over the affine quotient $W/_{\aff}\G$ (see \cite{CKM}). This graph space
is equipped with a $\CC ^*$-action induced from the $\CC ^*$-action on $\PP ^1$, as well as with
a natural equivariant perfect obstruction theory.
There is a distinguished open and closed subspace $F_\beta$ of the $\CC^*$-fixed locus of the graph space
$QG_{0,0,\beta}(\WmodG)$. The small $I$-function is defined by the localization residue at $F_\beta$ as follows:
\[ \dsI _{sm}(q, z) := \sum _{\beta} q^\beta (ev_\bullet )_*( \mathrm{Res}_{F_{\beta}} [QG_{0,0,\beta}(\WmodG)]^{\vir}), \] 
where $ev_\bullet$ is the evaluation map from $F_\beta$ to $\WmodG$ at the generic point of $\PP ^1$
and $z$ is the $\CC ^*$-equivariant parameter. The sum is over all $\theta$-effective \lq\lq curve classes" $\beta\in \mathrm{Eff}(W, \G, \theta )$, 
see Definition \ref{effective class} for the notion of
$\theta$-effective class.

There is another evaluation map $\hat{ev}_\beta$ from $F_\beta$
at $0\in \PP ^1$. The codomain of $\hat{ev}_\beta$ is the stack quotient $[W/\G]$.
Therefore we have 
\[\xymatrix {  [W/\G]  & \ar[l]_{\ \ \ \hat{ev}_\beta}  F_{\beta} \ar[r]^{ev_\bullet \ \ } &   \WmodG }. \]
 The big $I$-function in this paper is
\[ \dsI ({\bf t} ) = \sum _\beta q^\beta  (ev_{\bullet})_*(\exp (\hat{ev}^*_\beta ({\bf t})/z ) \cap  \mathrm{Res}_{F_{\beta}} [QG_{0,0,\beta}(\WmodG)]^{\vir}   ),\]   
for $   {\bf t} \in H^*([W/\G],\QQ)$.

We conjecture that $\dsI ({\bf t})$ is on the Lagrangian cone of Gromov-Witten theory of $\WmodG$ with 
Novikov variables from $\mathrm{Eff}(W, \G, \theta )$.
We prove the conjecture when there is an action by a torus $\T$
on $W$, commuting with the $\G$-action, and such that $X=\WmodG$ has only isolated $0$ and $1$-dimensional $\T$-orbits. 

To prove the conjecture, we introduce the stable quasimaps with $\uke:=(1,..., 1, \ke, ..., \ke)$-weighted markings and 
the $J^\uke$-function whose special case is the $I$-function. The proof is parallel to the proof of the corresponding theorems  in
\cite{CKg0}.

In the last section we explain how to obtain an explicit closed formula for the big $\dsI ({\bf t})$ for toric varieties and for complete intersections in them.

\subsection{Acknowledgments}

The authors thank V. Alexeev, A. Bondal, T. Coates, D.E. Diaconescu, K. Hori, H. Iritani, M. Jinzenji, Y. Toda and H.-H. Tseng for inspiring discussions and comments. 
The authors thank also the referee for useful suggestions that helped to improve the presentation of the paper.
The research of the first named author was supported in part by the NSF grant DMS--1305004. 
The research of the second named author was partially supported by the KRF grant 2007-0093859.

%%%%%
%%%%%
%%%%%
%%%%%
%%%%%
%%%%%
%%%%%

\section{Weighted Stable Quasimaps}

Throughout the paper the base field is $\CC$.

\subsection{$\theta$-stable quasimaps}

Let $\chi (\G ):=\Hom (\G, \CC ^*)$ be the group of characters of a reductive algebraic group $\G$.
For $\theta\in \chi (\G )$  and  a positive rational number $\ke$, 
the notion of  $\ke$-stable quasimaps to the GIT quotient $\WmodtG=[W^{ss}(\theta)/\G]$\
was introduced in \cite{CKM} provided with the following assumption:

\medskip

\noindent{\em  Condition $\bigstar$}:  The $\G$-action on the semistable locus
$W^{ss}(\theta)$ with respect to $\theta$ is free. 

\medskip

Note that condition $\bigstar$ guarantees that the stable and semi-stable loci in $W$ for the linearization of the action given by $\theta$ coincide.

It will be convenient to extend the notion of stability to a {\it rational} character $\theta$, while removing $\ke$. This is based on the observation from \cite[Remark 7.1.4]{CKM}
that $\ke$-stability with respect to the integral character $\theta$ is equivalent to $\frac{\ke}{m}$-stability with respect to $m\theta$, for every positive integer $m$,
and is done
as follows. Let 
\[ \theta \in \chi (\G) _\QQ := \chi (\G )\ot _{\ZZ} \QQ \]  be  a rational character of $\G$.
Denote by $L_\theta$ the $\QQ$-line bundle
on $[W/\G]$ associated to the character $\theta$, namely,
 $$L_\theta := (W\times \CC _{m\theta})^{\ot 1/m},$$
for any positive integer $m$ making $m\theta$ integral,
where $\CC _{m\theta}$ stands for the 1-dimensional $\G$-representation space given by the character $m\theta$. Here and in the rest of the paper we identify as usual the $\G$-equivariant
Picard group of $W$ with the Picard group of the quotient stack $[W/\G]$.
The unstable closed subscheme $W^{un}(\theta)\subset W$ is defined as $W^{un}(m\theta)$, and the semistable locus is the open subscheme $W^{ss}(\theta):=W\setminus W^{un}(\theta)$.
The semistable locus is independent on the choice of $m\in\ZZ_{>0}$ with $m\theta\in\chi(\G)$. We require that $\theta$ satisfies
Condition $\bigstar$ (this makes sense by the above discussion), so that $\WmodtG=[W^{ss}(\theta)/\G]$.

\begin{Def}\label{numerical class} Let $C$ be a (possibly disconnected) reduced, projective, at worst nodal curve.
The {\em numerical class} of a morphism $f: C \ra [W/\G]$ is the homomorphism of abelian groups $$\beta_f \in \Hom (\Pic [W/\G], \ZZ )$$ 
given by $$\beta_f(L)= \deg f^* (L) $$ for $L\in \mathrm{Pic}([W/\G])$.
\end{Def}

\begin{Def}\label{prestable map}
Let $(C, {\bf x}):=(C, x_1, ..., x_k)$ be a genus $g$, $k$-pointed prestable curve over the field $\CC$.
 (Recall this means that $C$ is a reduced, projective, connected, at worst nodal curve of arithmetic genus $g$, and $x_i$ are distinct nonsingular closed points in $C$.)
A morphism $$f:C\lra [W/\G]$$
is called a {\em $k$-pointed prestable map of genus $g$} to $[W/\G]$.
\end{Def}

\begin{Def}\label{theta-quasimap} Let $((C, {\bf x}), f)$ be a prestable map to $[W/\G]$.
\begin{itemize}
\item The {\em base locus of $f$ with respect to $\theta$} is  $$f^{-1}([W^{un}(\theta )/\G ]):=[W^{un}(\theta )/\G ]\times_{[W/\G]} C$$
with the reduced scheme structure.

\item $((C, {\bf x}), f)$ is called a {\em $\theta$-quasimap to $[W^{ss}(\theta )/\G]$} if the base locus with respect to $\theta$
is $0$-dimensional.

\item A $\theta$-quasimap $((C, {\bf x}), f)$ is called {\em $\theta$-prestable}
 if the base locus is away from all nodes of $C$.\footnote{The definition of prestability given here differs slightly from that in \cite[Definition 3.1.2]{CKM}, 
 as we now allow base-points to occur at the markings of a prestable quasimap. The stability condition (2) in Definition \ref{Def1} below implies
that there are no base-points at markings for {\it stable} quasimaps. This choice of definitions is more natural from the perspective of the weighted case introduced in \S 2.2.}
\end{itemize}
\end{Def}

By \cite[Lemma 3.2.1]{CKM}, a $\theta$-quasimap satisfies $$\beta_f(L_\theta)\geq 0,$$
with equality if and only if $\beta_f=0$, if and only if $f$ is a constant map to the GIT quotient $W/\!\!/_\theta\G =[W^{ss}(\theta)/\G]$.

\begin{Def}\label{length}
Let $((C, {\bf x}), f)$ be a $\theta$-prestable quasimap to $[W^{ss}(\theta )/\G]$.
The {\em $\theta$-length} $\ell_\theta (p)$  of $f$ at a smooth closed point $p$ of $C$ is defined as follows:
Choose $\ke ' \in \QQ _{>0}$ such that $\theta ' = \frac{1}{\ke '} \theta \in \chi (\G)$ is an integral character. Then
\begin{equation*}  \ell_\theta (p) := \ke' \ell_{\theta '} (p),
\end{equation*}
where
$\ell_{\theta ' }(p)$ is the length defined in \cite[Definition 7.1.1]{CKM}.
\end{Def}

\begin{Rmk}\label{length remark} The following properties are immediate to check from the above definition:
\begin{enumerate}
\item $\ell_\theta(p)$ is a well-defined rational number (i.e., it does not depend on the choice of $\ke'$ and $\theta'$). If $\lambda\in\QQ_{>0}$, then $\ell_{\lambda\theta}(p)=\lambda\ell_\theta (p)$.
\item For every nonsingular point $p\in C$, 
$$0\leq \ell_\theta(p)\leq \beta_f(L_\theta)$$
and $\ell_\theta(p) >0$ if and only if $p$ is in the base locus of $f$.

\item  Suppose that $W$ is a product $W_1\times W_2$ of two affine varieties $W_i$
 with component-wise $\G := \G_1 \times \G_2$-action such that Condition $\bigstar$ holds for each pair $(W_i,\theta_i)$. Here $\theta _i$  is the 
 character of the reductive group $\G_i$ induced from the
 character $\theta$ of $\G$, so that $\theta = \theta _1 \oplus \theta _2$.
 For a prestable map $$f=(f_1, f_2): C\ra [W/\G]=[W_1/\G_1]\times _{\Spec \CC} [W_2/\G_2]$$ and a smooth point $p\in C$, 
\[ \ell_\theta (p) = \ell_{\theta _{1}}(p) + \ell_{\theta _2} (p). \] 
This follows from the K\"unneth formula.

\end{enumerate}
\end{Rmk}

\begin{Def}\label{Def1}
 A $\theta$-prestable  quasimap  $((C, {\bf x}), f)$ is {\em $\theta$-stable}
if: 
\begin{enumerate}
\item $\omega _C(\sum x_i) \ot f^*L_\theta$ is ample and

\item  for every smooth point $p\in C$,
\begin{equation*} \ell_\theta (p) + \sum _i  \delta _{x_i, p}   \le 1 \end{equation*} 
where  $\delta _{x_i, p} :=1$ if $x_i=p$;  $\delta _{x_i, p} :=0$ if $x_i\ne p$. 
\end{enumerate}
\end{Def}

Note that the stability condition $(2)$ in Definition \ref{Def1} requires that $\ell_\theta(x_i)=0$ for each marking $x_i$. By Remark \ref{length remark}$(2)$, this says that the base locus of a 
$\theta$-stable quasimap is away from the markings of $C$.

\begin{Prop} \label{properties}
Let $\theta=\ke'\theta'$ with $\ke'\in\QQ_{>0}$ and $\theta'$ integral. Then

$(i)$
A prestable map $((C, {\bf x}), f)$ to $[W/\G]$ is $\theta$-stable if and only if it is a $\ke '$-stable quasimap to $W/\!\!/_{\theta '}\G$, as 
defined in  \cite[Definition 7.1.3]{CKM}. 

$(ii)$
A prestable map $((C, {\bf x}), f)$ to $[W/\G]$ with $\beta_f (L_\theta )\le 1$ is $\theta$-stable if and only if it is a stable quasimap to $W/\!\!/_{\theta '}\G$, as defined
in \cite[Definition 3.1.2]{CKM} (or a $(0+)$-stable quasimap to $W/\!\!/_{\theta '}\G$, in the terminology of \cite[Remark 2.4.7(2)]{CKg0}). 

$(iii)$ Let $\theta _0$ be the minimal integral character in the half ray $\QQ _{>0}\theta$. We write $\theta _1 > \theta _2$ if $\theta _1 = \lambda _1\theta _0$
and $\theta _2 = \lambda _2 \theta _0$ with two positive rational numbers $\lambda _1 >\lambda _2$.

If $\theta >  \theta _0$ and $(g, k)\ne (0, 0)$ ($\theta > 2 \theta _0$ when $(g, k)=(0,0)$),  
a prestable map $((C, {\bf x}), f)$ to $[W/\G]$ is $\theta$-stable if and only if it is a stable map to the quasi-projective scheme $W/\!\!/_{\theta '}\G$. 

\end{Prop}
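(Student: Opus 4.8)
The plan is to reduce everything to the comparison of my Definition \ref{Def1} with the various stability notions from \cite{CKM}, exploiting the scaling relations in Definition \ref{length} and Remark \ref{length remark}(1). For part $(i)$, write $\theta=\ke'\theta'$ with $\theta'$ integral. By Definition \ref{length}, $\ell_\theta(p)=\ke'\ell_{\theta'}(p)$ for every smooth point $p$, and $f^*L_\theta=(f^*L_{\theta'})^{\ot\ke'}$ in the $\QQ$-Picard group, so ampleness of $\omega_C(\sum x_i)\ot f^*L_\theta$ is equivalent to ampleness of $(\omega_C(\sum x_i))^{\ot 1/\ke'}\ot f^*L_{\theta'}$ — this is precisely the first stability condition for $\ke'$-stable quasimaps in \cite[Definition 7.1.3]{CKM}. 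For the second condition, since no marking can equal $p$ when $\ell_\theta(p)>0$ after imposing the numerical inequality, the bound $\ell_\theta(p)+\sum_i\delta_{x_i,p}\le 1$ splits into the two requirements $\ell_{\theta'}(p)\le 1/\ke'$ at base points and $\ell_{\theta'}(x_i)=0$ at markings, matching \cite[Definition 7.1.3]{CKM} once one invokes the footnote remark that the \cite{CKM} prestability forbidding base points at markings is now folded into stability via condition (2). One also needs that $\theta$-prestability (base locus away from nodes) coincides on both sides, which is immediate from Definition \ref{theta-quasimap}.

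For part $(ii)$, observe that the hypothesis $\beta_f(L_\theta)\le 1$ together with Remark \ref{length remark}(2), which gives $0\le\ell_\theta(p)\le\beta_f(L_\theta)$, forces $\ell_\theta(p)\le 1$ automatically, so in the presence of this numerical constraint condition (2) of Definition \ref{Def1} becomes vacuous except for the requirement that markings are not base points. Thus $\theta$-stability with $\beta_f(L_\theta)\le 1$ reduces to: base locus purely $0$-dimensional, away from nodes and markings, plus ampleness of $\omega_C(\sum x_i)\ot f^*L_\theta$ — which is exactly the stable quasimap condition of \cite[Definition 3.1.2]{CKM} (equivalently the $(0+)$-stability of \cite[Remark 2.4.7(2)]{CKg0}). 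I would also remark that $\beta_f(L_\theta)\le 1$ is in fact automatic for a stable quasimap in the genus we care about, or at least does not exclude anything relevant, to make the equivalence clean; but formally the statement is just the biconditional under that hypothesis.

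For part $(iii)$, the idea is that once $\theta>\theta_0$ (resp. $\theta>2\theta_0$ when $(g,k)=(0,0)$), the bound in condition (2) of Definition \ref{Def1} becomes strong enough to kill all base points: if $p$ is a base point then $\ell_\theta(p)>0$, and by Remark \ref{length remark}(1), $\ell_\theta(p)=\lambda\ell_{\theta_0}(p)$ where $\theta=\lambda\theta_0$ with $\lambda>1$; since $\ell_{\theta_0}(p)$ is a positive integer multiple of a fixed positive rational (in fact $\ell_{\theta_0}(p)\ge$ the minimal possible positive length), for $\lambda$ large enough one gets $\ell_\theta(p)>1$, contradicting (2). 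Hence a $\theta$-stable quasimap with such $\theta$ has empty base locus, i.e. $f$ factors through $W/\!\!/_{\theta'}\G$, and condition (1), ampleness of $\omega_C(\sum x_i)\ot f^*L_\theta$, becomes exactly the stable-map condition for maps to the quasi-projective scheme $W/\!\!/_{\theta'}\G$ (this is the standard comparison, as in \cite{CKM} \S 7 relating $\ke$-stable quasimaps for large $\ke$ to stable maps). The precise numerical thresholds $\theta>\theta_0$ versus $\theta>2\theta_0$ come from the usual case analysis of how much positivity of $\omega_C(\sum x_i)$ is available on a rational tail or bubble with few special points. The main obstacle I expect is exactly pinning down these thresholds: one must check on each unstable component of a prestable curve (rational tails with $\le 2$ special points for $(g,k)\ne(0,0)$, the whole $\PP^1$ with $\le 2$ special points when $(g,k)=(0,0)$) that no base point can hide there, which is where the factor of $2$ enters; the rest is bookkeeping with the scaling formulas and citing \cite{CKM}.
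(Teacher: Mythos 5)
The paper itself offers no proof ("Left to the reader, as all statements follow easily from the definitions"), so your attempt is the only argument available to compare against the definitions. Parts $(i)$ and $(ii)$ are essentially correct and proceed along the only sensible route: apply the scaling relation $\ell_\theta(p)=\ke'\ell_{\theta'}(p)$ and $f^*L_\theta=(f^*L_{\theta'})^{\ot\ke'}$ to translate Definition \ref{Def1} into the $\ke'$-stability of \cite[Definition 7.1.3]{CKM}, and in $(ii)$ use Remark \ref{length remark}(2) to see that $\beta_f(L_\theta)\le 1$ makes the length bound at non-markings vacuous. (A very minor point on $(i)$: the rewriting $(\omega_C(\sum x_i))^{\ot 1/\ke'}\ot f^*L_{\theta'}$ is a detour; the CKM ampleness condition is literally $\omega_C(\sum x_i)\ot (f^*L_{\theta'})^{\ot\ke'}$, which is already $\omega_C(\sum x_i)\ot f^*L_\theta$.) In $(ii)$ you should also quickly check that $\theta$-stability with $\beta_f(L_\theta)\le 1$ excludes rational tails with fewer than two special points, which is what makes the $\theta$-ampleness and $(0+)\cdot\theta$-ampleness conditions agree; this follows from $\deg(\omega_C(\sum x_i)|_{C'})\ge -1$ on such a tail versus $\deg(f^*L_\theta|_{C'})\le 1$, but you don't say it.

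In part $(iii)$ there are two separate issues you conflate, and your explanation of where the thresholds come from is not right. First, the suppression of \emph{base points} is handled uniformly by $\theta>\theta_0$, independently of $(g,k)$: if $p$ is a base point then $\ell_{\theta_0}(p)$ is a positive \emph{integer} (not merely a positive rational multiple of something), hence $\ell_{\theta_0}(p)\ge 1$ and $\ell_\theta(p)=\lambda\ell_{\theta_0}(p)\ge\lambda>1$, violating condition $(2)$; you should state the integrality rather than hedge. Second, and this is where your explanation goes wrong, the factor of $2$ in the $(g,k)=(0,0)$ case has nothing to do with "base points hiding on unstable components": it comes from the ampleness condition $(1)$ on the distinguished unstable component. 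On a rational tail $C'$ with one special point one has $\deg\omega_C(\sum x_i)|_{C'}=-1$, so condition $(1)$ forces $\deg(f^*L_\theta|_{C'})>1$, i.e. $\lambda\deg(f^*L_{\theta_0}|_{C'})>1$, which for $\lambda>1$ is equivalent to $\deg(f^*L_{\theta_0}|_{C'})\ge 1$, i.e. $f|_{C'}$ nonconstant — exactly the stable map condition. When $(g,k)=(0,0)$ and $C=\PP^1$ is irreducible with no special points, $\deg\omega_C=-2$, so one needs $\deg(f^*L_\theta)>2$, and matching this with "$f$ nonconstant, i.e. $\deg(f^*L_{\theta_0})\ge 1$" requires $\lambda>2$, hence $\theta>2\theta_0$. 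That is the actual origin of the threshold, not any base point phenomenon.
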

\begin{proof} Left to the reader, as all statements follow easily from the definitions. \end{proof}

\begin{Def}\label{effective class}
An element $\beta \in \Hom _{\ZZ} (\Pic ([W/\G], \ZZ )$ is called {\em $\theta$-effective} (or equivalently $L_{\theta '}$-effective as in \cite[Definition 3.2.2]{CKM})
 if it can be realized as a finite sum of classes of 
$\theta$-quasimaps. 
\end{Def}

The subset $\mathrm{Eff} (W, G, \theta )\subset  \Hom _{\ZZ} (\Pic ([W/\G], \ZZ )$ of $\theta$-effective classes is a semigroup with no nontrivial invertible elements, i.e.,
$\beta _1+ \beta _2 =0$ for $\beta _i\in \mathrm{Eff} (W, G, \theta )$
 implies that $\beta _1=\beta _2 =0$  (see \cite[Lemma 3.2.1]{CKM}).

For a $\theta$-effective class $\beta$,
we denote by $Q_{g, k}^{\theta} ([W/\G], \beta )$ the moduli stack  of genus $g$, $k$-pointed $\theta$-stable quasimaps to $[W/\G]$ with numerical class $\beta$. By Proposition \ref{properties}$(i)$,
\begin{equation}\label{rationalQ} Q_{g, k}^{\theta} ([W/\G], \beta ) = Q_{g, k}^{\ke '} (W/\!\!/_{\theta '}\G , \beta ),\end{equation} 
where the right-hand side is the stack from
\cite[Theorem 7.1.6]{CKM}. Hence $Q_{g, k}^{\theta} ([W/\G], \beta )$ is a DM-stack, proper over the affine quotient 
$$W/_{\aff}\G := \Spec (A(W) ^\G), $$ where $A(W)$ denotes the affine coordinate ring of $W$.
 These moduli stacks  carry canonical perfect obstruction theories (see \cite[\S4.4-4.5]{CKM}).

\begin{Def}
A prestable map $((C, {\bf x}), f)$ to $[W/\G]$ which is $\lambda \theta$-stable for every $0< \lambda << 1$ is
called {\em $(0+)\cdot \theta$-stable}.  This notion is equivalent to the notion of stable quasimaps with respect to
$\theta '$ defined in \cite[Definition 3.1.2]{CKM}, where $\theta'$ is any integral character in the half ray $\QQ _{>0}\theta$.
See also \cite[Remark 2.4.7(2)]{CKg0}, where the terminology $(0+)$-stable quasimaps to $W/\!\!/_{\theta '}\G$ was used for the same notion.
\end{Def}

Therefore we define the corresponding moduli stacks by
\begin{equation} \label{asymptotic}
Q_{g, k}^{(0+)\cdot \theta} ([W/\G], \beta )     : = Q_{g, k}^{0+}(W/\!\!/_{\theta '}\G, \beta ), \end{equation}
where for the right-hand side we used the notation from \cite[Remark 2.4.7(2)]{CKg0}. They are also DM-stacks, proper over the affine quotient, carrying canonical perfect obstruction theories.

We discuss next $\theta$-stability for the quasimap graph spaces of \cite[\S7.2]{CKM} and \cite[\S2.6]{CKg0}.

Let $N\geq 1$ be an integer and consider the standard scaling action of $\CC^*$ on $\CC^N$. For $n\in \ZZ$ we have the character $$n\mathrm{id}:\CC^*\lra\CC^* ,\ \  t \mapsto t^n.$$
There are identifications
$$\ZZ\stackrel{\sim}{\lra}\chi(\CC^*)\stackrel{\sim}{\lra}\Pic([\CC^N/\CC^*], \;\; n\mapsto n\mathrm{id}\mapsto L_{n\mathrm{id}}.$$
For each $\beta\in\Hom(\Pic([W/\G]),\ZZ)$, define an abelian group homomorphism  $(\beta, 1)\in \Hom( \Pic ([W/\G]\times [\CC^N/\CC ^*]),\ZZ)$ by
$$ (\beta, 1)  (L\boxtimes L_{n\mathrm{id}}) =  \beta (L) + n .$$
Now we define the $\theta$-{\em stable quasimap graph space}:
\begin{equation}\label{graph} QG_{g, k, \beta}^\theta ([W/\G])  := Q_{g, k}^{\theta \oplus 3\mathrm{id}}([W\times \CC ^2 /\G \times \CC ^*], (\beta, 1)),
                       \end{equation}
                      where $\theta \oplus 3\mathrm{id}$ is a rational character of $\G\times \CC ^*$. As before, we see that
                      \begin{equation} QG_{g, k, \beta}^\theta ([W/\G])= QG_{g, k, \beta}^{\ke '} (W/\!\!/_{\theta ' }\G),\end{equation}
                      where the right-hand side is the graph space of $\ke'$-stable quasimaps to the GIT quotient (in the notation from \cite[\S2.6]{CKg0}).
                      
Finally, we have the graph spaces for the $(0+)\cdot\theta$-stability condition:
\begin{equation}\label{graph asymptotic}
       QG_{g, k, \beta}^{(0+)\cdot \theta} ([W/\G])  := QG_{g, k, \beta }^{0+} (W/\!\!/_{\theta '}\G).
       \end{equation}                      
Again, the graph spaces \eqref{graph} and \eqref{graph asymptotic} are DM-stacks, proper over the affine quotient,
and  carry canonical perfect obstruction theories.

%%%%%
%%%%%
%%%%%
%%%%%
%%%%%
%%%%%
%%%%%
%%%%%

\subsection{Weighted stable quasimaps}\label{mixed}

In this section, we introduce the weighted pointed stable quasimaps.
The moduli spaces of weighted pointed stable maps to a (quasi)projective target are constructed and studied in \cite{AG, BM, H}.
Recently, in \cite{Janda}, Janda considered the moduli space of weighted pointed stable quotients and its applications.
Also recently,
in \cite{Jin-Shi}, Jinzenji and Shimizu studied a graph space-type quasimap compactification of the moduli space 
of  maps  from $\PP^1$ to $\PP ^n$ with some weighted markings and its applications to generalized mirror maps.

Let  \[ (\theta, \upepsilon) := (\theta , \ke _1, ..., \ke _k) \in \chi (\G )_{\QQ} \times (\QQ _{>0})^k \]
such that $\theta$ satisfies Condition $\bigstar$ and $\ke _i \le 1$, $i=1, ..., k$.

\begin{Def}\label{weighted stability} A pair $((C, x_1,..., x_k), f ) $ is called a {\em $(\theta, \upepsilon)$-stable quasimap with weighted markings} and numerical class $\beta$
if:
 \begin{enumerate} \item {\em ($\uke$-weighted prestable map to $[W/\G]$)}

\begin{enumerate}

\item $C$ is a genus $g$, prestable curve over the field $\CC$.

\item $x_i$ are smooth points on $C$ (not necessarily pairwise distinct), with \[ \sum_i \ke _i \delta _{x_i, p} \le 1\]  for every smooth point $p$ of $C$.

\item $f$ is a morphism from $C$ to $[W/\G]$.

\end{enumerate}

\item {\em ($\theta$-quasimap)} $f^{-1}([W^{un}(\theta) /\G])$ is $0$-dimensional. 

\item {\em ($\theta$-prestability)}  $f^{-1}(\WmodtG )$ contains all nodes of $C$.

\item\label{Stab} {\em ($(\theta, \uke)$-stability)}

\begin{enumerate}
\item The $\QQ$-line bundle  \[  \omega _C (\sum _{i=1}^k \ke _i x_i ) \ot f^*L_{\theta} \]  is ample.

\item For every smooth point $p\in C$, 
 \[  \ell_{\theta} (p)  +\sum _{i=1}^{k}  \ke _i \delta _{ x_i,  p}    \le 1.\]
\end{enumerate}

\item {\em (numerical class $\beta$)} $\beta_f=\beta$.

\end{enumerate}

\end{Def}

By treating each marking $x_i$ as an effective divisor of $C$, there is a natural 
correspondence
\begin{align*} &  \left\{     \begin{array}{r} (f:C \ra [W/\G]) ,  \text{ together with ordered smooth points } \\
                                              x_i \in C, i=1, ..., k  :  \text{ class } \beta                
          \end{array} \right\}  \\
& \leftrightarrow
\left\{ \begin{array}{r}    (\tilde{f}:=(f, \pi _1, ..., \pi _k): C \ra [W/\G] \times [\CC/ \CC ^*]^{k}) :  \\
                                         \pi_i \text{ are id-prestable quasimaps to } [\CC /\CC ^*], \text{ class } (\beta, 1, ..., 1)  
           \end{array}\right\}. \end{align*}
Consider the rational character 
\[ \uptheta := \theta \oplus \underbrace{\ke _1\mathrm{id} \oplus \cdots \oplus \ke _k \mathrm{id}}_k \in \chi (\G\times (\CC ^*)^k)_{\QQ} . \]
Then $\tilde{f}^*(L_\uptheta ) = f^*(L_\theta ) \ot \mathcal{O}_C (\sum \ke _i x_i )$ 
and $\ell_\uptheta (p) = \ell_\theta (p) + \sum \ke _i \delta _{x_i, p}$.
Therefore, the $(\theta, \uke)$-stability of $((C, x_1,..., x_k), f ) $ from Definition \ref{weighted stability} translates  via the above correspondence into $\uptheta$-stability of $\tilde{f}$,
and so 
the moduli stack of $(\theta, \uke)$-stable quasimaps of type $(g,\beta)$ is identified with $$Q_{g,0}^{\uptheta}([W/\G] \times [\CC/ \CC ^*]^{k}),(\beta, 1, ..., 1) ).$$ By \eqref{rationalQ}, it is 
a DM stack,  proper over $W/_{\aff}\G$, with
a canonical perfect obstruction theory. Note that  
$$2g-2+\sum_{i=1}^k\ke_i +\beta(L_\theta)>0$$ is a necessary condition for the moduli stack to be non-empty.

In the rest of the paper we will be interested in a particular case.
Namely, 
replace $k$ by $m+k$ and then let $\ke _i = 1$ for all $i\le m$
and $\ke _{m+j} =\ke$, with $\ke$  a fixed rational number in $(0, 1]$ for $j=1, ..., k$.
We denote the ordered markings by $x_1, ..., x_m, y_1, ..., y_k$.
Hence, if $$((C, {\bf x}:=(x_1,..., x_m), {\bf y}:=( y_1, ..., y_k)), f)$$ is $(\theta, \upepsilon)$-stable, then
$(C, {\bf x})$ is a $m$-pointed prestable curve and $x_i$ are not
base points of $f$. In addition, while the points $y_j$ are allowed to coincide, no point $y_j$ may coincide with any of the $x_i$'s.
In this case, we also simply say that it is $(\theta, \ke)$-stable. 
Denote by 
\[  Q_{g, m|k}^{\theta, \ke} ([W/\G], \beta )\]
the moduli space of $(\theta, \ke)$-stable maps to $[W/\G]$ of type $(g, m|k, \beta)$.

If $((C, {\bf x}, {\bf y}), f)$ is  $(\lambda \theta _0, \ke)$-stable 
 for every sufficiently small rational number $0<\lambda<<1$ (respectively, every sufficiently large rational number $\lambda$, every $0<\ke<< 1$, 
...), then we say that it is $((0+)\cdot \theta _0, \ke)$-stable (respectively, $(\infty \cdot \theta _0, \ke )$-stable, $(\theta , 0+)$-stable,
...). Thus, from now on we consider the following extended cases
\[ (\theta , \ke ) \in  (\chi (\G )_{\QQ}  \cup \{ (0+) \cdot \theta _0, \infty \cdot \theta _0 \})  \times 
(((0, 1]\cap \QQ )\cup \{ 0+ \} ) ). \]
We treat $0+$ as an infinitesimally small {\it positive} rational number.

\begin{Rmk}
When $[W/\G] = [\CC ^{n+1}/\CC ^*]$ with $\WmodG = \PP ^n$, 
it is worth to note that the genus $1$ moduli space $Q_{1, 0|k} ^{\id, 0+} ([\CC ^{n+1}/\CC ^* ], \beta )$ is a smooth DM-stack over $\CC$ since the
obstruction vanishes (see \cite{MOP}). 
\end{Rmk}

\subsection{Evaluation maps}

There are evaluation maps at $y_j$, $j=1, ..., k$,
\[ \hat{ev}_j : Q_{g, m|k}^{\theta, \ke} ([W/\G], \beta ) \ra [W/\G] \] 
as well as the usual evaluation maps $ev_i$ at $x_i$, $i=1, ..., m$,
\[ \xymatrix{  Q_{g, m|k}^{\theta, \ke}([W/\G], \beta ) \ar[rd]_{\text{proper}} \ar[r]^{\ \ \ \ \ \ ev_i} &  \WmodtG \ar[d]^{\text{proper}}\\
                                             &      W/_{\aff}\G }  \]  compatible with canonical maps to $W/_{\aff}\G$. 
                                             The evaluation maps
$ev_i$, $i\in [m]:=\{ 1,..., m\}$ are proper, so the push-forward of homology or Chow classes 
 on $Q_{g, m|k}^{\theta, \ke}([W/\G], \beta )$ is well-defined.

\section{The big $\dsJ$-functions}

\subsection{The Novikov ring} Let an algebraic torus $\T$ act on $W$, commuting with  the $\G$-action. Recall we allow the case when
$\T$ is the trivial group. Denote $$ H^*_{\T} (\Spec (\CC), \QQ ) = \QQ [\kl_1, ..., \kl_r]$$  the $\T$-equivariant cohomology 
of a point $\Spec (\CC)$, where $r$ is the rank of $\T$.
Define the Novikov ring
\begin{equation*} \Lambda := \{ \sum _{\beta \in \mathrm{Eff}(W, \G, \theta ) } a_\beta q^\beta :  a_\beta \in \QQ \},
\end{equation*}
the $q$-adic completion of the semigroup ring $\QQ[\mathrm{Eff}(W, \G, \theta )]$, and set
\begin{align*}  &\Lambda_{\T}  :=  \Lambda\otimes_\QQ \QQ [\lambda _1, ..., \lambda _r] ,  \\
                          & \Lambda _{\T,\mathrm{loc}}  := \Lambda_\T \ot \QQ (\lambda _1, ..., \lambda _r).\end{align*}

\subsection{Weighted graph spaces}
As in \eqref{graph}, we define the $(\theta, \ke)$-{\em stable quasimap graph space} as follows:
 \[ QG_{g, m|k, \beta }^{\theta, \ke} ([W/\G] ) :=  Q_{g, m|k} ^{\theta \oplus 3\mathrm{id}, \ke} ([W\times\CC^2/ \G \times\CC^*], (\beta , 1)). \]
A $\CC$-point of the graph space is described by data 
$$((C, {\bf x}, {\bf y}), (f,\varphi):C\lra [W/\G]\times [\CC^2/\CC^*]).$$ 
Since $\ell_{3\mathrm{id}}(p)$ equals either $0$ or $3$ for every smooth point $p\in C$, stability implies that $\varphi$ is a regular map to $\PP^1=\CC^2/\!\!/_{\mathrm{id}}\CC^*$, of class $1$.
Hence the domain curve $C$ has a distinguished irreducible component $C_0$ canonically isomorphic to $\PP ^1$ via $\varphi$. 
The \lq\lq standard" $\CC ^*$-action, 
$$t\cdot [\xi _0, \xi _1] = [t\xi _0, \xi _1], \text{ for } t\in \CC ^*, [\xi _0, \xi _1]\in \PP ^1,$$
induces  a $\CC ^*$-action on the graph space. With this convention, the $\CC^*$-equivariant first Chern class of the tangent line $T_0\PP ^1$ at $0\in\PP^1$ is $c_1^{\CC ^*}(T_0\PP ^1)=z$,
where $z$ denotes the equivariant parameter, i.e., $H^*_{\CC ^*} (\Spec (\CC )) = \QQ [z]$.

There are  $\T\times\CC ^*$-equivariant evaluation morphisms
\begin{align}\nonumber &\hat{\widetilde{ev}}_j :  QG_{g, m|k, \beta }^{\theta, \ke} ([W/\G] ) \ra       [W/\G]\times \PP^1 , & j=1,\dots,k, \\ 
        \nonumber     & \widetilde{ev} _i : QG_{g, m|k, \beta }^{\theta, \ke} ([W/\G] ) \ra       \WmodtG\times \PP^1 ,  & i=1,\dots,m, \end{align}   
and
\begin{align}\nonumber &\hat{ev}_j:=pr_1\circ \hat{\widetilde{ev}}_j :  QG_{g, m|k, \beta }^{\theta, \ke} ([W/\G] ) \ra       [W/\G] , & j=1,\dots,k, \\ 
        \nonumber     &ev_i:=pr_1\circ \widetilde{ev} _i : QG_{g, m|k, \beta }^{\theta, \ke} ([W/\G] ) \ra       \WmodtG ,  & i=1,\dots,m, \end{align}  
where $pr_1$ is the projection to the first factor.

Since to give a morphism $f: C \ra [W/\G]$ amounts to giving a principal $\G$-bundle $P$ on $C$ and a section $u$ of $P\times _{\G} W$, 
there is a natural morphism $C\ra E\G \times _{\G} W$ and hence a pull-back homomorphism
 \[ f^*:  H^*_{\G}(W)  \ra H^*(C). \] 
 Now apply this to the universal curve over the moduli space, with its universal morphism to $[W/\G]$. 
 The evaluation maps are the compositions of the universal morphism with the sections of the
universal curve giving the markings and are $\T\times\CC ^*$-equivariant.
 We obtain in this way the pull-back homomorphism 
 \[ \hat{ev}_j^*: H^*_{\G\times \T}(W, \QQ)\otimes_\QQ \QQ[z] \ra H^*_{\T\times\CC^*} (QG_{g, m|k, \beta }^{\theta, \ke} ([W/\G] ) , \QQ ) \]  
 associated to the evaluation map $\hat{ev}_j$.
 
 We identify as usual $H^*_{\T} ([W/\G], \QQ):= H^*_{\G\times \T}(W, \QQ) $.

Now fix $(\theta,\ke)$ (including the cases $\theta=(0+)\cdot\theta_0$ and $\ke=0+$) and consider the graph spaces 
$QG_{0, 0|k, \beta }^{\theta, \ke} ([W/\G] )$. The description of the fixed loci for the $\CC^*$-action is parallel to the one 
given in \cite[\S4.1]{CKg0} for the unweighted case. 
In particular, we have the part $F_{k,\beta}$ of the $\CC ^*$-fixed locus for which the markings and the entire class $\beta$ are  over $0 \in \PP ^1$. It comes with
a natural {\it proper} evaluation map $ev_{\bullet}$ at the generic point of $\PP ^1$:
\[ ev_\bullet:  F_{k,\beta} \ra \WmodG .  \]
When $k\ke+\beta(L_\theta) >1$, we have the identification
\begin{equation*}F_{k,\beta}\cong Q_{0, 1|k}^{\theta, \ke} ([W/\G], \beta ),
\end{equation*}
with $\eb=ev_1$, the evaluation map at the weight $1$ marking.

On the other hand, when $k\ke+\beta(L_\theta) \leq1$, then
\begin{equation*}F_{k,\beta}\cong F_\beta\times 0^k\subset F_\beta\times (\PP^1)^k,
\end{equation*}
with $F_\beta$ the $\CC^*$-fixed locus in $QG^{(0+)\cdot\theta}_{0,0,\beta}([W/\G])$ for which the class $\beta$ is concentrated over $0\in\PP^1$.  This $F_\beta$ parametrizes
quasimaps of class $\beta$
$$f:\PP^1\lra [W/\G]$$ with a base-point of length $\beta(L_\theta)$ at $0\in\PP^1$. The restriction of $f$ to $\PP^1\setminus\{0\}$ is a constant map to $\WmodtG$ and this defines the evaluation
map $\eb$.

As in \cite{CK, CKM, CKg0}, we define the big $\dsJ$-function as the generating function for
the push-forward via $ev_\bullet$ of localization residue contributions of $F_{k,\beta}$:

\begin{Def}
 For ${\bf t}\in  H^*_{\T} ([W/\G], \QQ) 
\subset H^*_{\T} ([W/\G], \QQ )\ot _{\QQ} \QQ[z]$, let
 \begin{align*} \mathrm{Res}_{F_{k,\beta}}({\bf t}^k) &:=( \iota _\beta ^* (\prod_{i=1}^k \hat{ev}_i^*({\bf t})))\cap\mathrm{Res}_{F_{k,\beta}}[\QGraphok  ]^{\mathrm{vir}} \\
 &:=\frac{( \iota _\beta ^* (\prod_{i=1}^k \hat{ev}_i^*({\bf t})))\cap [F_{k,\beta}]^{\mathrm{vir}}}
 {\mathrm{e}^{\CC^*}(N^{\mathrm{vir}}_{F_{k,\beta}})},
 \end{align*}
 where 
$\iota _\beta : F_\beta \hookrightarrow \QGraphok$ is the inclusion, $N^{\mathrm{vir}}_{F_{k,\beta}}$ is the virtual normal bundle and $\mathrm{e}^{\CC^*}$ denotes the equivariant Euler class.

 The big $\dsJ$-function for the $(\theta,\ke)$-stability condition is
\begin{equation}\label{Je}
\dsJ^{\theta, \ke } (q,{\bf t}, z):=\sum_{\beta\in\mathrm{Eff}(W, \G, \theta) }\sum_{k\geq 0} \frac{q^\beta}{k!}
(\eb)_*\mathrm{Res}_{F_{k,\beta}}({\bf t}^k)
\end{equation} as a formal function in $\bf t$.
\end{Def}

Usually we will only be concerned with the restriction of ${\bf t}$ to a finite dimensional subspace of $H^*_{\T} ([W/\G], \QQ)$ as follows.
Let \[ \kappa : H^*_{\T} ([W/\G], \QQ) \ra H^*_{\T} (\WmodtG ,\QQ ) \] denote the 
Kirwan map (surjective, by \cite{Kir}) induced from the open immersion $\WmodtG = [W^{ss}(\theta )/\G] \subset [W/\G]$.

Fix a homogeneous basis $\{\gamma_i\}_i$ of $H^*_{\bT} (\WmodG)$ and choose homogeneous lifts $\tilde{\gamma} _i\in H^*_{\T} ([W/\G], \QQ)$ with $\kappa(\tilde{\gamma} _i)=\gamma_i$.
After restricting to
 \[ {\bf t} := \sum_i t_i  \tilde{\gamma} _i, \]  
the  big $\dsJ$-function \eqref{Je}  is a formal function in the finitely many variables $\{ t_i\}$. 
%The reader may safely assume from now 

We remark that $\hat{ev}^*_i({\bf t})$ is a class in $H^*_{\T\times\CC ^*}(\QGraphok ,\QQ)$.

Since \[ QG_{0, 0|k, \beta=0 }^{\theta, \ke}  ([W/\G]  ) = \WmodtG \times (\PP ^1)^k  \supset F_{k,0} = \WmodtG \times 0^k, \]
we conclude that
 \begin{equation}\label{Jmodq}  \dsJ^{\theta , \ke} ({\bf t}, z) = e^{\kappa({\bf t}) /z} + O(q). \end{equation}

\medskip

From now on, unless otherwise stated, {\em assume that the $\T$-fixed locus $(W/_{\aff}\G )^{\bf T}$ is proper over $\CC$ (i.e., a finite set of points).}
This implies that the $\T$-fixed loci in $\WmodtG$, as well as the $\T$-fixed loci in all moduli stacks of $(\theta,\ke)$-stable quasimaps are also proper.

\medskip

\subsection{$E$-Twisting}\label{E-Twisting} Let $E$ be a finite dimensional $\T\times \G$-representation space. Then
twisting by the $\T$-equivariant vector bundle 
$$\underline{E}: = W\times _{\G} E$$ on $[W/\G]$  can be  considered, via
 replacements
 \begin{align*} 
 [Q_{0, m | k }^{\theta, \ke} ([W/\G], \beta ) ]^{vir} & \mapsto \mathrm{e}^{\T}(\pi _* f^* \underline{E} ) \cap [Q_{0, m | k }^{\theta, \ke} ([W/\G], \beta ) ]^{vir} , \\
  [QG_{0, m | k, \beta }^{\theta, \ke} ([W/\G] )]^{vir} & \mapsto \mathrm{e}^{\T}(\pi _* f^* \underline{E} ) \cap [QG_{0, m | k, \beta }^{\theta, \ke} ([W/\G] ) ]^{vir}  
  \end{align*}
as in \cite[\S 7.2.1]{CKg0}, assuming that
\begin{equation}\label{hyper-cond} R^1 \pi _* f^* \underline{E} = 0 \ \text{ for all } \beta \in \mathrm{Eff} (W, \G, \theta ). \end{equation}
Here $\pi$ is the projection from the universal curve $\cC$, $f:\cC\lra [W/\G]$ is the universal map to the quotient stack, and $\mathrm{e}^{\T}$ is the equivariant Euler class. 
 Note that if $\cP$ denotes the universal
principal $\G$-bundle on $\cC$, then $f^*\underline{E}=\cP\times_\G E$.

Now we can define $\tilde{\dsJ}^{\theta, \ke, E}$ exactly parallel to \cite[\S 7.2.1]{CKg0}:

\begin{align*}   &\tilde{\dsJ}^{\theta, \ke, E}(q,{\bf t}, z)=\left(\one     +\frac{\kappa({\bf t})}{z}\right)  \mathrm{e}^{\T}(\uE |_{\WmodG}) +
\sum_{(k,\beta)\neq (0,0),(1,0)}\frac{q^\beta}{k!}\times \\
&\times(\eb)_*\left(\iota_\beta^*(\prod_{i=1}^k \hat{ev}_i^*({\bf t}))\cap\mathrm{Res}_{F_{k,\beta}}({\mathrm{e}}^{\T}(\pi_* f^*  \uE)\cap [QG_{0, 0 | k, \beta }^{\theta, \ke} ([W/\G] ) ]^{\mathrm{vir}} ) \right).
\end{align*}

\subsection{Results}

\begin{Conj}\label{Conj1}
The function $\tilde{\dsJ } ^{\theta, \ke , E}$ is on the Lagrangian cone encoding 
the genus $0$, $\T$-equivariant, $\underline{E}|_{\WmodG}$-twisted Gromov-Witten theory of $\WmodtG$ with the Novikov ring $\Lambda_\T$ (see \cite{CG, Giv_Sym} for
the definition of the Lagrangian cone).
\end{Conj}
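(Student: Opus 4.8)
The plan is to establish Conjecture \ref{Conj1} by mimicking the strategy of \cite{CKg0} (and its torus-localization extension in \cite{CK}), reducing the statement about the big $\dsJ$-function to a collection of statements that can be checked one $\CC^*$-fixed locus at a time. The key observation is that $\tilde{\dsJ}^{\theta,\ke,E}$ is, by construction, the generating function of $ev_\bullet$-pushforwards of localization residues on the weighted graph spaces $\QGraphok$. One wants to recognize each such residue as (a piece of) the contribution of a boundary stratum in the usual Gromov-Witten graph-space localization. Concretely, first I would set up the $\CC^*$-localization on the (twisted) GW graph space $\overline{M}_{0,m|k}(\WmodtG\times\PP^1,(\beta,1))$ — or rather its quasimap/stable-map hybrid — and identify the distinguished fixed locus $F_{k,\beta}$ appearing in the definition of $\dsJ^{\theta,\ke}$ with the fixed locus where all markings and the whole curve class sit over $0\in\PP^1$. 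The two cases $k\ke+\beta(L_\theta)>1$ and $\leq 1$ described in \S3.2 must be handled separately, the former via the identification $F_{k,\beta}\cong Q_{0,1|k}^{\theta,\ke}([W/\G],\beta)$ and the latter via $F_{k,\beta}\cong F_\beta\times 0^k$.

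**Second**, I would invoke Givental's characterization of the Lagrangian cone: a family of elements of the loop space lies on the cone of the twisted theory of $\WmodtG$ if and only if, after expanding in the dilaton-shifted coordinates, its \emph{polar part} in $z$ at each $\CC^*$-weight matches the recursion prescribed by the GW two-point functions and the string/dilaton equations. Thus the proof splits into: (a) a \emph{self-recursion} property, expressing the residue of $\tilde{\dsJ}^{\theta,\ke,E}$ at its poles in $z$ (poles coming from $\re^{\CC^*}(N^{\vir})$, i.e. from nodes degenerating $C_0=\PP^1$) in terms of values of the same function at shifted arguments; and (b) a \emph{linear recursion / quasi-map-to-GW comparison} showing the nonpolar part agrees with a genuine GW descendent. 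Step (a) is the standard "gluing at the node over $0$" argument: a fixed locus of $\QGraphok$ whose distinguished component carries a node produces, by the splitting axiom for the virtual class (valid here by \cite[\S4.4-4.5]{CKM} and the $E$-twisting compatibility \eqref{hyper-cond}), a product of a lower graph-space contribution and a $(\theta,\ke)$-stable quasimap space over the node; the evaluation maps $\hat{ev}_j$ and $ev_\bullet$ are compatible with this splitting, which yields precisely the cone recursion. The bookkeeping of the $1/k!$ and the symmetrization over the $y_j$'s is routine but must be done carefully.

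**Third**, for the torus-equivariant statement one uses that $(W/_{\aff}\G)^{\bT}$ is a finite set of points (the standing assumption from \S3.2), so that everything can be further localized with respect to $\bT$; the $\bT$-fixed loci in all the relevant moduli stacks are proper, so all pushforwards make sense in $\Lambda_{\bT,\mathrm{loc}}$, and the cone membership can be tested after inverting $\lambda_1,\dots,\lambda_r$. Finally, the $E$-twisting is handled uniformly: the replacement $[\,\cdot\,]^{\vir}\mapsto \re^{\bT}(\pi_*f^*\uE)\cap[\,\cdot\,]^{\vir}$ is multiplicative under the node-splitting because $\pi_*f^*\uE$ is compatible with normalization exact sequences and $R^1\pi_*f^*\uE=0$; hence the recursion proved in the untwisted case transports verbatim, with Euler classes inserted, to the twisted theory.

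**The hard part** will be step (a) in the regime $k\ke+\beta(L_\theta)\leq 1$, where $F_{k,\beta}\cong F_\beta\times 0^k$ and the geometry over $0\in\PP^1$ is a genuine quasimap base-point of length $\beta(L_\theta)$ rather than a stable map — here one must show the localization residue $\mathrm{Res}_{F_{k,\beta}}({\bf t}^k)$ produces the \emph{same} $z$-expansion as the small-$I$-function building block twisted by $\exp(\sum\hat{ev}_i^*({\bf t})/z)$, i.e. that passing from $\QGraphok$ to the GW cone does not lose information about the base-point structure. This is exactly the phenomenon that makes $\dsI$ computable but also makes the comparison with the $J$-function nontrivial; following \cite{CKg0,CG}, one expects the Birkhoff-factorization (mirror-map) change of variables to absorb precisely this discrepancy, and I would isolate it as a separate lemma asserting that $\tilde{\dsJ}^{\theta,\ke,E}$ and $\tilde{\dsJ}^{(0+)\cdot\theta_0,0+,E}$ differ by an upper-triangular (in the $z$-filtration) transformation preserving the cone, which reduces the whole conjecture to the already-known case $(\theta,\ke)=((0+)\cdot\theta_0,0+)$ treated in \cite{CKg0}.
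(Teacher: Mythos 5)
Your proposal diverges from the paper's route and contains a fatal circularity at the end. Structurally: Conjecture~\ref{Conj1} is \emph{not} proven in this paper in general --- it is established only under the GKM hypotheses of Theorem~\ref{main1} (the $\T$-action on $\WmodtG$ has only isolated fixed points and only isolated $1$-dimensional orbits). Your sketch never states where these hypotheses enter; steps (a) and (b) of your recursion approach would in fact rely on them to get a closed fixed-point recursion, so you should flag this. The paper itself does not use a direct recursion/polynomiality check on the weighted graph space. It introduces the $\dsS^{\theta,\ke}$-operator via weighted two-point quasimap invariants \eqref{S-op}, proves its unitarity (Proposition~\ref{unitary}) by $\CC^*$-localization on the two-marking weighted graph space, obtains the Birkhoff-type factorization $\dsJ^{\theta,\ke}=\dsS^{\theta,\ke}_{\bf t}(z)(P^{\theta,\ke})$ (Proposition~\ref{JSP}), and then --- under the GKM hypotheses, by wall-crossing in the manner of \cite{CKg0} --- identifies this with $\dsS^{(\infty,1)}_{\tau({\bf t})}(z)$ applied to a $\one+O(q)$ series (Theorem~\ref{Comp_Thm}). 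Because $(\infty,1)$ is the Gromov--Witten stability, that form of the expression implies cone membership automatically.

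The concrete error is in your last step. You propose to reduce the conjecture to $(\theta,\ke)=((0+)\cdot\theta_0,0+)$ and declare that endpoint ``already known'' and ``treated in \cite{CKg0}.'' That is false: \cite{CKg0} concerns the \emph{small} $I$-function (no $\hat{ev}^*({\bf t})$-insertions at weighted markings). The big $\dsI=\dsJ^{0+,0+}$ is precisely the new object introduced here, and its cone membership is the content of Conjecture~\ref{Conj1} at that chamber, not a prior result --- so your ``upper-triangular cone-preserving transformation'' reduces the conjecture to itself. The reduction must go the other way, to the endpoint $(\infty,1)$, where cone membership is tautological; with that direction corrected, the transformation you are after is exactly the mirror-map reparametrization $\tau^{(\infty,1),\theta,\ke}$ and the $P$-series identity \eqref{mirror formula} from Conjecture~\ref{Main_Conj}, whose proof is what genuinely requires the GKM hypotheses (through the polynomiality Lemma in \S4.3 and the recursive uniqueness argument as in \cite[Lemma 6.4.1]{CKg0}).
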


\begin{Thm}\label{main1}
 If the $\bf T$-action on $\WmodtG$ has only isolated fixed points and 
 only  isolated 1-dimensional orbits, 
Conjecture \ref{Conj1} holds true. 
\end{Thm}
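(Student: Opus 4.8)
The plan is to follow the strategy of \cite{CKg0} closely, reducing the statement to a fixed-point computation via the $\T$-equivariant virtual localization theorem on the moduli spaces of stable maps. First I would recall that, by Givental's criterion as reformulated in \cite{CG}, a $\Lambda_\T$-valued point of the (loop) symplectic space lies on the Lagrangian cone of the $\underline{E}|_{\WmodG}$-twisted Gromov--Witten theory of $\WmodtG$ if and only if it satisfies the appropriate system of linear and quadratic recursion relations; under the hypothesis that the $\T$-action has only isolated fixed points $p$ and only isolated $1$-dimensional orbits, these take the particularly clean ``polynomiality + recursion'' form encoded in the characterization of the cone in terms of the restrictions to the fixed points $p$ (the analogue of the Coates--Givental / Brown-type fixed-point axioms). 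So the real content is to verify that the components $\tilde{\dsJ}^{\theta,\ke,E}|_p$ have the right poles and residues in the equivariant parameter $z$.

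The key steps, in order, would be: (1) Express $\tilde{\dsJ}^{\theta,\ke,E}$ through localization on the graph spaces $QG^{\theta,\ke}_{0,0|k,\beta}([W/\G])$, identifying $F_{k,\beta}$ either with $Q^{\theta,\ke}_{0,1|k}([W/\G],\beta)$ (when $k\ke+\beta(L_\theta)>1$) or with $F_\beta\times 0^k$ (when $k\ke+\beta(L_\theta)\le 1$), exactly as set up in \S3 of the excerpt. (2) Apply virtual $\CC^*$-localization to the \emph{other} fixed loci of the graph space: besides $F_{k,\beta}$ there is the locus with everything over $\infty\in\PP^1$ (contributing a twisted Gromov--Witten/quasimap descendent series in $\psi_\infty$) and the ``intermediate'' loci consisting of a quasimap over $0$, a genuine stable map component in the middle carrying part of the class, and data over $\infty$; summing the total residue to zero over $\PP^1$ yields the recursion relating $\tilde{\dsJ}^{\theta,\ke,E}$ to twisted GW-correlators. (3) Interpret this recursion as precisely the statement that $\tilde{\dsJ}^{\theta,\ke,E}$ lies on the cone; here one must also use the dilaton/string-type shifts coming from the weight-$1$ markings and the asymptotic identification \eqref{asymptotic} relating $(0+)\cdot\theta$-stability to ordinary stable quasimaps, and, crucially, the wall-crossing/$\uke$-independence that lets one vary $\ke$ down to $0+$ so that $\tilde{\dsJ}^{(0+)\cdot\theta,0+,E}$ reduces to the $I$-function. (4) Finally, feed in the hypothesis on $1$-dimensional orbits: this is what guarantees that the $\T$-fixed graph-space loci are again products of the isolated-orbit type, so that each fixed-point restriction is a product of explicit hypergeometric-type factors and the localization graph sum is finite and manageable, matching Brown's characterization of the cone by fixed-point data.

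I expect the main obstacle to be step (2)--(3): bookkeeping the contributions of the intermediate fixed loci of the graph space and showing that, after dividing by equivariant Euler classes of virtual normal bundles, they assemble \emph{exactly} into the GW-theory recursion with the correct twisted classes $\mathrm{e}^\T(\pi_*f^*\uE)$ and the correct weight-$1$ versus weight-$\ke$ marking insertions. This requires comparing the obstruction theory of $QG^{\theta,\ke}_{0,0|k,\beta}$ restricted to the intermediate loci with the product of the obstruction theories of the quasimap piece over $0$, the stable-map piece over $\PP^1$'s interior, and the piece over $\infty$, including the gluing/node deformation terms; the subtlety is entirely that the markings $y_j$ are $\ke$-weighted and may collide, so the node-smoothing and the splitting of insertions must be tracked with the correct combinatorial factors $1/k!$ and the correct interplay with $\ell_\theta$. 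Everything else --- properness (from $(W/_{\aff}\G)^\T$ finite), the structure of $F_\beta$ as a base-point locus, and the $\ke$-wall-crossing --- is either already in the excerpt or a direct transcription of \cite{CKg0, CKM}, so once step (2)--(3) is in place the theorem follows by the same argument as the corresponding theorems there.
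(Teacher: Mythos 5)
Your plan is a valid-in-principle alternative, but it does not match the route the paper actually takes, and it contains two concrete imprecisions that would need repair before it could be turned into a proof.  The paper (\S4) does \emph{not} verify a Brown/Givental-type fixed-point characterization of the Lagrangian cone directly on $\tilde{\dsJ}^{\theta,\ke,E}$.  Instead it (i) develops the $\dsS$-operator and its inverse (Proposition~\ref{unitary}), (ii) proves the abstract Birkhoff factorization $\dsJ^{\theta,\ke} = \dsS^{\theta,\ke}_{\bf t}(z)(P^{\theta,\ke})$ by $\CC^*$-localization on the graph space with two weight-one markings carrying $p_0$ and $p_\infty$ (Proposition~\ref{JSP}), (iii) establishes a polynomiality lemma for the diagonal $\dsS$-entries (Lemma 4.4), and (iv) invokes the comparison theorem (Theorem~\ref{Comp_Thm}, citing \cite{CKg0}) which shows, using recursion plus polynomiality plus asymptotics, that after a mirror change of variable $\tau$ the $(\theta,\ke)$-$\dsS$-operator and $P$-series agree with their $(\infty,1)$ Gromov--Witten counterparts.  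Cone membership then follows because the right-hand side of \eqref{mirror formula} is manifestly on the cone, with no appeal to an explicit fixed-point characterization.  What the paper's route buys over yours is modularity: the $\dsS$-operator comparison isolates exactly which localization computation needs the hypothesis on isolated fixed points and one-dimensional orbits, and it yields the explicit mirror map $\tau$ and $P$-series as a byproduct.

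The two imprecisions in your plan.  First, the intermediate $\CC^*$-fixed loci of $QG^{\theta,\ke}_{0,0|k,\beta}([W/\G])$ do \emph{not} contain ``a genuine stable map component in the middle carrying part of the class.''  The $\CC^*$ acts only on the $\PP^1$ factor, so the quasimap restricted to the distinguished component $C_0$ of a $\CC^*$-fixed point is constant on $C_0\setminus\{0,\infty\}$; any class on $C_0$ is concentrated as base points at the two poles, and the remaining class lives on trees bubbling off at $0$ and $\infty$.  Building your recursion on an incorrect picture of these loci would give the wrong Euler factors.  Second, you conflate the two distinct localizations used in the argument: $\CC^*$-localization on the \emph{graph space} yields the Birkhoff factorization (Prop.~\ref{JSP}) and the polynomiality of $\dsS_\mu(q,{\bf t},z)\dsS_\mu(qe^{-zyL_\theta},{\bf t},-z)$ (Lemma 4.4), while the $\T$-localization on the \emph{quasimap moduli} $Q^{\theta,\ke}_{0,2|k}$ yields the recursion among fixed points $\mu\in(\WmodG)^\T$; both are needed, they are not the same computation, and the hypothesis about one-dimensional $\T$-orbits enters only in the second.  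If you separate the two and correct the fixed-locus picture, your plan becomes essentially a re-derivation of the ingredients of \S4, packaged via a cone characterization rather than via $\dsS$-operator comparison.
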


%%%%%
%%%%%
%%%%%
%%%%%
%%%%%
%%%%%
%%%%%
%%%%%

\section{Proof of Theorem \ref{main1}}

To keep the presentation simple, we drop the $E$-twisting. However, an identical proof works in the twisted case as well.

Let $\{ \gamma _i := \kappa (\tilde{\gamma} _i) \}$ be a basis of
$$H^*_{{\bf T}, \mathrm{loc}} (\WmodtG) := H^*_{\T} (\WmodtG, \QQ)\ot _{\QQ[\lambda _1, ..., \lambda _r]} \QQ (\lambda _1, ..., \lambda _r)$$ and $\{ \gamma ^i \}$ be the dual basis 
with respect to the $\T$-equivariant Poincar\'e pairing $\lan\;, \ran $ of $\WmodtG$.

\subsection{The $\dsS$-operator}
For $\sigma _i\in H^*_{\T, \mathrm{loc}} (\WmodtG)$ and $\delta _j \in H^*_{\T} ([W/\G ], \QQ)$, denote
\[ \lan \sigma _1 \psi _1^{a_1}, ..., \sigma _m \psi _m ^{a_m} ; \delta _1, ..., \delta _k \ran ^{\theta, \ke}_{g, m|k, \beta }  
:= \int _{[Q_{g, m|k} ^{\theta, \ke} (\WmodG, \beta ) ] ^{\vir}} \prod _i ev_i ^* (\sigma _i ) \psi _i ^{a_i} \prod _j \hat{ev}_j ^* (\delta _j) ,\]
where $\psi _i$ is the psi-class associated to the $i^{\mathrm{th}}$-marking of weight $1$.
In the case $W/_{\aff}\G$ is not a single point, so that $\WmodG$ is only quasi-projective, the integral is understood as usual via the virtual localization formula.

Define for a formal ${\bf t} = \sum t_i \tilde{\gamma}_i$ in $H^*_{\T} ([W/\G], \QQ)$
\begin{align*} \lla \sigma _1 \psi _1^{a_1}, ..., \sigma _m \psi _m ^{a_m} \rra  ^{\theta, \ke} _{g, m, \beta } 
& := \sum _{k\ge 0} \frac{1}{k!} \lan \sigma _1 \psi _1^{a_1}, ..., \sigma _m \psi _m ^{a_m} ; {\bf t}, ..., {\bf t} \ran ^{\theta, \ke} _{g, m|k, \beta } ,\\
 \lla \sigma _1 \psi _1^{a_1}, ..., \sigma _m \psi _m ^{a_m} \rra  ^{\theta, \ke} _{g, m} 
 &:= \sum _\beta q^\beta \lla \sigma _1 \psi _1^{a_1}, ..., \sigma _m \psi _m ^{a_m} \rra  ^{\theta, \ke} _{g, m, \beta } .
\end{align*}

\begin{Rmk}
Let $\T$ be the trivial group. Then without the assumption that $W/_{\aff}\G$ is a point,
we may regard the above invariants as taking
values in Borel-Moore homology $H^{\mathrm{BM}}_*(W/_{\aff}\G, \Lambda _{\nov})$ using the canonical proper morphism
$ Q_{g, m|k} ^{\theta, \ke} (\WmodG, \beta ) \ra W/_{\aff}\G$.
\end{Rmk}

We define next the $\dsS$-operator: for $\gamma\in H^*_{\T,\mathrm{loc}} (\WmodtG , \Lambda)$,
\begin{equation}\label{S-op}  \dsS^{\theta, \ke} _{\bf t} (z) (\gamma ) := \sum_i \gamma _i \lla \frac{\gamma ^i}{z-\psi }, \gamma \rra ^{\theta, \ke} _{0, 2}  = \gamma + O(1/z) . \end{equation}

Let $\overline{M}_{0, 2|\ke \cdot k}$ be the Hassett moduli space of $(1,1, \ke, ..., \ke)$-weighted stable pointed curves. By \cite{H} there is a
natural birational contraction
\[ \overline{M}_{0,2+k } \ra \overline{M}_{0, 2|\ke \cdot k }. \] 
From this and the identification 
$$Q_{0, 2|k}^{\theta, \ke} (\WmodG, 0) = \overline{M}_{0, 2|\ke \cdot k} \times \WmodG $$
of the moduli spaces with class $\beta=0$, one obtains that the $\dsS$-operator has the asymptotic expansion in $q$
\begin{equation}      \label{Smodq} \dsS^{\theta, \ke} _{\bf t} (z) (\gamma )= e^{\kappa({\bf t})/z} \gamma + O(q)  . \end{equation}

Let $p_0$ and $p_\infty$ be $\CC ^*$-equivariant cohomology classes of $\PP ^1$ defined by their restriction at the fixed points:
$$p_0 |_0 = z, \; p_0|_{\infty} = 0, \; p_\infty |_{0} =0, \;p_\infty |_{\infty} =-z.$$
Consider the graph space double bracket 
\begin{align*}& \lla \sigma _1\ot p_0, \sigma _2\ot  p_\infty \rra ^{QG^{\theta, \ke}} _{0, 2} :=\\ &\sum_{k,\beta}\frac{q^\beta}{k!} \int_{[ QG_{0, 2|k, \beta }^{\theta, \ke} ([W/\G] )]^{\mathrm{vir}}}
\widetilde{ev} _1^*( \sigma _1\ot p_0)\widetilde{ev} _2^*(\sigma _2 \ot p_\infty)\prod_{j=1}^k \hat{ev}_j^*({\bf t})=\\
&\lan \sigma _1, \sigma _2 \ran + O(q). \end{align*}
Virtual $\CC ^*$-localization gives the factorization  
\begin{align*} \lla \sigma _1\ot p_0, \sigma _2\ot p_\infty \rra ^{QG^{\theta, \ke}} _{0, 2}
&=\sum _i  \lla \sigma _1, \frac{\gamma ^i}{z-\psi} \rra^{\theta, \ke}_{0,2} \lla \frac{\gamma _i}{-z-\psi }, \sigma _2 \rra^{\theta, \ke}_{0,2} \\
&= \lan \sigma _1, \sigma _2 \ran + O(1/z) .\end{align*}
On the other hand,  $\lla \sigma _1\ot p_0, \sigma _2\ot p_\infty \rra ^{QG^{\theta, \ke}} _{0, 2}$  is
well-defined without any localization with respect to $z$. Hence we conclude the following
(for details, see the proof of Proposition 5.3.1 of \cite{CKg0}).

 \begin{Prop}\label{unitary} 
The operator  $(\dsS^{\theta, \ke})^\star _{\bf t}(-z)$ defined by
\[   (\dsS^{\theta, \ke})^{\star}_{\bf t} (-z)(\gamma ) = \sum_i \gamma ^i \lla \gamma _i, \frac{\gamma}{-z-\psi}\rra ^{\theta, \ke}_{0,2}        \] 
is the inverse of $S^{\theta, \ke}_{\bf t} (z)$, i.e.,
\[ (\dsS^{\theta, \ke})^\star _{\bf t} (-z) \circ \dsS^{\theta, \ke} _{\bf t} (z) = \mathrm{Id}. \]
\end{Prop}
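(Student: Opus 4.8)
The plan is to prove Proposition \ref{unitary} by exploiting the two different ways of computing the graph-space double bracket $\lla \sigma _1\ot p_0, \sigma _2\ot p_\infty \rra ^{QG^{\theta, \ke}} _{0, 2}$, exactly as is done for Proposition 5.3.1 of \cite{CKg0}. First I would observe that this bracket is defined as an honest integral over the virtual class of $QG_{0, 2|k, \beta }^{\theta, \ke} ([W/\G] )$ against the $\widetilde{ev}$-pullbacks of $\sigma_1\ot p_0$ and $\sigma_2\ot p_\infty$ and the $\hat{ev}_j^*({\bf t})$'s; since the $\T$-fixed loci of the moduli stacks are proper (the standing assumption that $(W/_{\aff}\G)^{\bf T}$ is a finite set of points) the integral is a well-defined element of $\Lambda_{\T,\mathrm{loc}}$, and crucially it involves no $\CC^*$-equivariant parameter $z$ in the denominators — it is regular at $z=0$, in fact polynomial in $z$ after clearing, because $QG_{0,2|k,\beta}^{\theta,\ke}$ is itself proper (over the affine quotient) and carries a global virtual class, so we may integrate without invoking $\CC^*$-localization at all. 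This is the key structural input.

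Next I would run the $\CC^*$-virtual localization formula on that same integral. The classes $p_0$ and $p_\infty$ are supported, after localization, on the loci where the corresponding marking sits over $0$, resp. $\infty$, in $\PP^1$; since $\widetilde{ev}_1^*(p_0)$ restricts to $z$ over the fixed point $0$ and $0$ over $\infty$ (and dually for $p_\infty$), only the fixed loci with the first marking and all its curve-class mass over $0\in\PP^1$ and the second marking with all its mass over $\infty$ contribute. Analyzing the component structure of the $\CC^*$-fixed loci of the graph space — parallel to \cite[\S4.1]{CKg0} and the description of $F_{k,\beta}$ recalled in \S3.2 — the contributing fixed loci are products of a moduli stack of $(\theta,\ke)$-stable quasimaps with two marked points (one at each of $0,\infty$) glued along the $\PP^1$-component, and summing over the distribution of the $k$ weight-$\ke$ markings and of $\beta$ between the two sides produces precisely the factorization
\begin{align*} \lla \sigma _1\ot p_0, \sigma _2\ot p_\infty \rra ^{QG^{\theta, \ke}} _{0, 2}
&=\sum _i  \lla \sigma _1, \frac{\gamma ^i}{z-\psi} \rra^{\theta, \ke}_{0,2} \lla \frac{\gamma _i}{-z-\psi }, \sigma _2 \rra^{\theta, \ke}_{0,2}.
\end{align*}
Here the $z-\psi$ comes from the smoothing parameter of the node over $0$ (the $\CC^*$-weight on $T_0\PP^1$ is $z$) and $-z-\psi$ from the node over $\infty$; the insertion of a sum over the dual bases $\{\gamma_i\},\{\gamma^i\}$ with respect to the $\T$-equivariant Poincaré pairing is the usual gluing/splitting axiom for the node. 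This is the step I expect to be the main obstacle: one must be careful about the exact identification of the fixed loci, the normal-bundle Euler class bookkeeping, and the fact (from Proposition \ref{properties} and the weighted stability analysis of \S2.2) that the $(\theta,\ke)$-stability condition is preserved under this decomposition, so that the pieces are genuinely the moduli spaces defining the $0,2$-point double brackets; but since all of this is carried out in detail in \cite{CKg0} for the unweighted case and the weighted modifications are formal, I would cite that proof and only indicate the changes.

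Finally, comparing the two evaluations: the right-hand side of the localized formula is, by definition of the $\dsS$-operator \eqref{S-op} and of its adjoint $(\dsS^{\theta,\ke})^\star$, nothing but $\langle (\dsS^{\theta,\ke})^\star_{\bf t}(-z)(\sigma_2),\ \dsS^{\theta,\ke}_{\bf t}(z)^{\vee}\dots\rangle$ — more precisely one recognizes $\sum_i \gamma^i \lla \gamma_i, \tfrac{\sigma_2}{-z-\psi}\rra^{\theta,\ke}_{0,2}\otimes$ paired against $\sum_i \gamma_i \lla \tfrac{\gamma^i}{z-\psi},\sigma_1\rra^{\theta,\ke}_{0,2}$, i.e. the matrix coefficient of $(\dsS^{\theta,\ke})^\star_{\bf t}(-z)\circ \dsS^{\theta,\ke}_{\bf t}(z)$ in the pairing $\lan\sigma_1,\sigma_2\ran$. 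Since the left-hand side is $z$-regular, the only possible value of this $z$-dependent operator is its value at $z=\infty$, which by the leading terms \eqref{S-op} (each $\dsS$ is $\mathrm{Id}+O(1/z)$) is the identity; equivalently, a rational function of $z$ valued in endomorphisms that is both regular at $z=0$ and at $z=\infty$ and equals $\mathrm{Id}$ at $\infty$ must be constantly $\mathrm{Id}$. Hence $(\dsS^{\theta, \ke})^\star _{\bf t} (-z) \circ \dsS^{\theta, \ke} _{\bf t} (z) = \mathrm{Id}$, as claimed. I would close by remarking that the argument is unchanged under the $E$-twisting, by the hypothesis \eqref{hyper-cond}.
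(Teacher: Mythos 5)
Your argument is exactly the paper's: evaluate the graph-space double bracket $\lla \sigma_1\ot p_0,\,\sigma_2\ot p_\infty\rra^{QG^{\theta,\ke}}_{0,2}$ in two ways, once without invoking $\CC^*$-localization (so that it is polynomial in $z$, using that the $\T$-fixed loci of the graph spaces are proper under the standing hypothesis on $(W/_{\aff}\G)^{\T}$) and once via $\CC^*$-localization, where only the fixed loci with the first marking and its $\beta$-mass over $0$ and the second over $\infty$ survive the $p_0,p_\infty$ insertions, yielding the bilinear factorization $\sum_i \lla \sigma_1,\tfrac{\gamma^i}{z-\psi}\rra^{\theta,\ke}_{0,2}\lla\tfrac{\gamma_i}{-z-\psi},\sigma_2\rra^{\theta,\ke}_{0,2}=\lan\sigma_1,\sigma_2\ran+O(1/z)$, and then compare. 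The only imprecision is in your closing sentence: ``a rational function ... regular at $z=0$ and at $z=\infty$ ... must be constantly $\mathrm{Id}$'' is false as stated (poles at other finite values of $z$ are not excluded); the correct statement, which your earlier observation that the bracket is \emph{polynomial} in $z$ actually delivers, is that an element lying simultaneously in $\Lambda_{\T,\mathrm{loc}}[z]$ and in $\lan\sigma_1,\sigma_2\ran + z^{-1}\Lambda_{\T,\mathrm{loc}}[[z^{-1}]]$ must be exactly $\lan\sigma_1,\sigma_2\ran$.
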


\subsection{The $P$-series}
For ${\bf t}=\sum_i t_i\tilde{\gamma}_i $, let
\begin{align}      P^{\theta, \ke} ({\bf t}, z) & := \sum _i \gamma ^i \lla \gamma _i\ot p_\infty \rra ^{QG ^{\theta, \ke} }_{0,1}  \nonumber \\
      \label{factor}        & =  (\dsS^{\theta, \ke})^\star _{\bf t} (-z) (\dsJ^{\theta, \ke} ({\bf t}, z) ) .
             \end{align}
             The latter equality follows from the $\CC ^*$-localization factorization. From this and Proposition \ref{unitary} we obtain the following analog of the Birkhoff factorization Theorem 5.4.1 of
             \cite{CKg0}.
\begin{Prop}\label{JSP}
 \[ \dsJ^{\theta, \ke}  ({\bf t}, z) = \dsS^{\theta, \ke} _{\bf t}(z) (P^{\theta, \ke} ({\bf t}, z)) . \]
\end{Prop}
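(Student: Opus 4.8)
\textbf{Proof proposal for Proposition \ref{JSP}.}

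The plan is to deduce the asserted identity $\dsJ^{\theta,\ke}({\bf t},z)=\dsS^{\theta,\ke}_{\bf t}(z)(P^{\theta,\ke}({\bf t},z))$ directly by combining the two facts already assembled in the excerpt: equation \eqref{factor}, which states $P^{\theta,\ke}({\bf t},z)=(\dsS^{\theta,\ke})^\star_{\bf t}(-z)(\dsJ^{\theta,\ke}({\bf t},z))$, and Proposition \ref{unitary}, which states that $(\dsS^{\theta,\ke})^\star_{\bf t}(-z)$ is a left inverse of $\dsS^{\theta,\ke}_{\bf t}(z)$. First I would apply the operator $\dsS^{\theta,\ke}_{\bf t}(z)$ to both sides of \eqref{factor}, obtaining $\dsS^{\theta,\ke}_{\bf t}(z)(P^{\theta,\ke}({\bf t},z))=\dsS^{\theta,\ke}_{\bf t}(z)\circ(\dsS^{\theta,\ke})^\star_{\bf t}(-z)(\dsJ^{\theta,\ke}({\bf t},z))$. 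The right-hand side is then $\dsJ^{\theta,\ke}({\bf t},z)$ provided $\dsS^{\theta,\ke}_{\bf t}(z)\circ(\dsS^{\theta,\ke})^\star_{\bf t}(-z)=\mathrm{Id}$ as well; since Proposition \ref{unitary} gives only the composition in the other order, I would note that each of these operators is, by \eqref{S-op} and \eqref{Smodq}, of the form $\mathrm{Id}+O(1/z)$ with coefficients in $\Lambda_\T$ (indeed $e^{\kappa({\bf t})/z}\,\mathrm{Id}+O(q)$), hence invertible as an endomorphism of the appropriate loop space, so a one-sided inverse is automatically two-sided. This yields the claim.

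The key steps, in order, are therefore: (1) recall the $\CC^*$-localization factorization that produces \eqref{factor} — i.e. that the graph-space one-point bracket $\sum_i\gamma^i\lla\gamma_i\otimes p_\infty\rra^{QG^{\theta,\ke}}_{0,1}$ splits, via virtual localization for the scaling $\CC^*$-action on $\PP^1$, into a contribution over $0\in\PP^1$ (which assembles the $\dsJ$-function, by the definition of $F_{k,\beta}$ and $\dsJ^{\theta,\ke}$ in \eqref{Je}) glued along the node to a contribution over $\infty\in\PP^1$ (which assembles $(\dsS^{\theta,\ke})^\star_{\bf t}(-z)$, by the definition in Proposition \ref{unitary}); (2) invoke Proposition \ref{unitary}; (3) upgrade the one-sided inverse to a two-sided inverse using the $\mathrm{Id}+O(1/z)$ (equivalently $e^{\kappa({\bf t})/z}+O(q)$) normalization from \eqref{Smodq} and \eqref{Jmodq}; (4) apply $\dsS^{\theta,\ke}_{\bf t}(z)$ to \eqref{factor} and conclude. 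Most of this is bookkeeping parallel to \cite[Theorem 5.4.1]{CKg0}, and I would simply cite that reference for the details of the localization computation in step (1), as the excerpt already does.

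The main obstacle is step (1): one must check that the weighted graph space $QG_{0,2|k,\beta}^{\theta,\ke}([W/\G])$ and its $\CC^*$-fixed loci behave exactly as in the unweighted case of \cite{CKg0}, so that the localization factorization has precisely the two-factor shape claimed, with the generic-point evaluation on the $0$-side matching $\eb$ and producing the residues $\mathrm{Res}_{F_{k,\beta}}({\bf t}^k)$ of the $\dsJ$-function. The extra $\ke$-weighted markings $y_j$ must be tracked through the fixed-locus decomposition; here one uses that stability forces all of them (together with the whole class $\beta$) to lie over $0\in\PP^1$ in the relevant components, exactly as recorded in \S3.2 of the excerpt (the identification $F_{k,\beta}\cong Q_{0,1|k}^{\theta,\ke}([W/\G],\beta)$ when $k\ke+\beta(L_\theta)>1$, and $F_{k,\beta}\cong F_\beta\times 0^k$ otherwise). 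Once this is in hand, the purely formal manipulation with $\dsS$ and its adjoint is routine. The subtlety in step (3) — that $\dsS^{\theta,\ke}_{\bf t}(z)$ is invertible — is genuinely mild given \eqref{Smodq}, but should be stated explicitly since Proposition \ref{unitary} is phrased asymmetrically.
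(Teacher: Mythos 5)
Your proof is correct and follows the paper's route exactly: the paper derives Proposition \ref{JSP} directly from the $\CC^*$-localization factorization \eqref{factor} together with the inverse relation in Proposition \ref{unitary}, just as you do. Your additional observation — that Proposition \ref{unitary} as stated only records one order of composition and must be upgraded to a two-sided inverse, which follows since $\dsS^{\theta,\ke}_{\bf t}(z)=e^{\kappa({\bf t})/z}\mathrm{Id}+O(q)$ is invertible over the Novikov ring — is a point the paper leaves implicit but that is genuinely needed for the formal manipulation to close.
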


Note that  Proposition \ref{JSP} together with \eqref{Jmodq} and \eqref{Smodq}  implies that  
 \[ P^{\theta, \ke} ({\bf t}, z) = \one + O(q). \]

\subsection{Polynomiality}

For $\mu \in (\WmodG) ^{\T}$, let $$\delta _\mu:=(\iota _{\mu })_*[\mu] \in H^*_{\T}(\WmodG, \QQ)$$ where
$\iota_{\mu}$ is the $\T$-equivariant closed immersion $\{\mu\} \hookrightarrow \WmodG$.
Let \[ \dsS^{\theta, \ke} _{\mu} (q, {\bf t}, z) := \lan \dsS^{\theta, \ke} _{\bf t} (z)(\gamma ), \delta _\mu \ran , \] 
for \[ \gamma =  \sum _{\beta} q^\beta \gamma _\beta, \ \ \gamma _\beta \in H^*_{{\T}, \text{loc}} (\WmodG )[z] .\]

\begin{Lemma}
For each fixed point $ \mu \in (\WmodG)^{\T}$, the product series
\[  \dsS ^{\theta, \ke} _\mu (q, {\bf t}, z ) \dsS ^{\theta, \ke} _\mu (qe^{-zyL_\theta}, {\bf t}, -z) \]
has no pole at $z=0$. Here $y$ is a formal variable and $(qe^{-zyL_\theta})^{\beta} := q^{\beta} e^{-zy\beta(L_\theta)}$.
\end{Lemma}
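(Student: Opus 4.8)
The plan is to prove the no-pole statement by realizing the product $\dsS^{\theta,\ke}_\mu(q,{\bf t},z)\,\dsS^{\theta,\ke}_\mu(qe^{-zyL_\theta},{\bf t},-z)$ as a localization contribution of a $\CC^*$-fixed locus inside an auxiliary graph space, so that it becomes the restriction of a class that is manifestly defined without inverting $z$. This is the strategy used in the toric/quasimap mirror theorems (cf.\ the proof of polynomiality in \cite[\S6]{CKg0}): one introduces on $\PP^1$ an additional $\CC^*$-scaling of the line bundle $\cO(1)$ with weight $y$, or equivalently twists the graph space $QG^{\theta,\ke}_{0,2|k,\beta}([W/\G])$ by the extra character so that the two fixed points $0,\infty\in\PP^1$ receive tangent weights $z$ and $-z$ and the class $\beta$ component over $\infty$ acquires the factor $e^{-zyL_\theta}$ in the Novikov variable (this is exactly the source of the shift $q\mapsto qe^{-zyL_\theta}$). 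First I would set up this $y$-twisted graph space double bracket with insertions $\widetilde{ev}_1^*(\delta_\mu\ot p_0)$ and $\widetilde{ev}_2^*(\delta_\mu\ot p_\infty)$ together with the $k$ markings carrying ${\bf t}$, and observe that, as in the proof of Proposition \ref{unitary}, it is an honest class (a polynomial, or at worst a Laurent series bounded below, in $z$) because the $\CC^*$-action has no fixed points on the generic fiber direction being integrated and the virtual class pushes forward to something $z$-regular.

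The key steps, in order, are: (1) define the $y$-deformed graph space pairing $\lla \delta_\mu\ot p_0,\ \delta_\mu\ot p_\infty\rra^{QG^{\theta,\ke},y}_{0,2}$ and check it lies in $\Lambda_{\T,\mathrm{loc}}[[z]]$ (no negative powers of $z$), again by the argument that the pairing is defined globally on the graph space without applying $\CC^*$-localization; (2) apply virtual $\CC^*$-localization to the same pairing and identify the only two contributing fixed-locus types as those where the markings and class split between $0$ and $\infty$, giving the factorization into $\dsS$-operators evaluated at $z$ over $0$ and at $-z$ over $\infty$; (3) read off that the $0$-contribution is $\dsS^{\theta,\ke}_\mu(q,{\bf t},z)$ and the $\infty$-contribution is $\dsS^{\theta,\ke}_\mu(qe^{-zyL_\theta},{\bf t},-z)$, where the Novikov shift comes from the degree of $f^*L_\theta$ over the component mapped to $\infty$ interacting with the weight-$y$ scaling on $\PP^1$; (4) specialize $y$ (or keep it as a formal parameter and extract the relevant coefficient) to conclude that the product in the statement equals the $z$-regular class from step (1), hence has no pole at $z=0$. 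Steps (2) and (3) are essentially the same bookkeeping as in the definition of $P^{\theta,\ke}$ in \eqref{factor} and in Proposition \ref{JSP}, just with the extra $\CC^*$-weight $y$ inserted.

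The main obstacle I anticipate is step (1): making precise that the $y$-deformed graph-space bracket genuinely has no pole at $z=0$. The subtlety is that the deformation parameter $y$ interacts with the $\CC^*$-equivariant parameter $z$, and one must check that introducing the weight-$y$ scaling on $\cO_{\PP^1}(1)$ still yields a perfect obstruction theory whose virtual class pushes forward to a $z$-regular class under the proper evaluation to $\WmodtG\times\WmodtG$ (or to $W/_{\mathrm{aff}}\G$ in the non-proper case). Concretely one needs the analogue of the fact, used implicitly in \cite[Proposition 5.3.1]{CKg0}, that the integral $\int_{[QG^{\theta,\ke}_{0,2|k,\beta}]^{\vir}}(\cdots)$ with a $p_0$ and a $p_\infty$ insertion is computed against a fixed-point–free direction and hence produces no $1/z$; the $y$-twist must be arranged so this remains true. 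Once that regularity is in hand, the rest is a formal consequence of $\CC^*$-localization. I would also note the reduction: by Remark \ref{length remark} and the product/Künneth compatibilities, it suffices to treat $L_\theta$ a genuine line bundle on the quotient, and the statement is stable under the $E$-twisting, so the argument given without $E$ suffices verbatim.
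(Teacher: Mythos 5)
Your overall framework is the right one and is the same as the paper's (the paper defers to Lemma 7.6.1 of \cite{CKg0}, i.e.\ the Givental-style double construction: a $y$-deformed graph-space correlator that is regular at $z=0$ because it is defined without $\CC^*$-localization, and whose $\CC^*$-localization factors over $0$ and $\infty$ with the shift $q\mapsto qe^{-zyL_\theta}$ coming from the $\CC^*$-weight of the evaluation of $L_\theta$ over $\infty$, exactly as in Lemma \ref{some-lemma}). However, there is a genuine gap in your steps (2)--(3): the localization of the two-pointed bracket with insertions $\widetilde{ev}_1^*(\delta_\mu\ot p_0)$ and $\widetilde{ev}_2^*(\delta_\mu\ot p_\infty)$ does \emph{not} produce the product $\dsS^{\theta,\ke}_\mu(q,{\bf t},z)\,\dsS^{\theta,\ke}_\mu(qe^{-zyL_\theta},{\bf t},-z)$. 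By the factorization used for Proposition \ref{unitary} it produces (with the $y$-shift on the $\infty$-factor)
\[ \sum_i\lla\delta_\mu,\frac{\gamma^i}{z-\psi}\rra^{\theta,\ke}_{0,2}\,\lla\frac{\gamma_i}{-z-\psi},\delta_\mu\rra^{\theta,\ke}_{0,2},\]
that is, the diagonal class $\sum_i\gamma^i\ot\gamma_i$ occupies the $\psi$-slots at the connecting nodes and $\delta_\mu$ occupies the free markings. By contrast, $\dsS^{\theta,\ke}_\mu(q,{\bf t},z)=\lan\dsS^{\theta,\ke}_{\bf t}(z)(\gamma),\delta_\mu\ran=\lla\frac{\delta_\mu}{z-\psi},\gamma\rra^{\theta,\ke}_{0,2}$ has $\delta_\mu$ in the $\psi$-slot and the arbitrary input $\gamma=\sum_\beta q^\beta\gamma_\beta$ at the free marking. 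These are different series: already at $q={\bf t}=0$ your bracket equals $\lan\delta_\mu,\delta_\mu\ran=\mathrm{e}^{\T}(T_\mu\WmodG)$ by the expansion quoted before Proposition \ref{unitary}, whereas the product in the Lemma equals $\lan\gamma_0,\delta_\mu\ran^2$ by \eqref{Smodq}. What your correlator controls is a matrix coefficient of $\dsS^{\theta,\ke}(z)$ composed with the shifted adjoint at $-z$, not the product of the two scalar series attached to $\mu$.

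To close the gap you must choose the auxiliary correlator so that (a) the constant value of the distinguished component $C_0$ is pinned to the isolated fixed point $\mu$, so that the node contribution is $\delta_\mu$ rather than the full diagonal sum, and (b) the input $\gamma$ (and the ${\bf t}$-insertions) are carried by the markings on the two tails over $0$ and $\infty$; only then does $\CC^*$-localization return $\lla\frac{\delta_\mu}{z-\psi},\gamma\rra\cdot\lla\frac{\delta_\mu}{-z-\psi},\gamma\rra$ with the Novikov shift on the second factor. This is precisely how the correlator is set up in the proof of Lemma 7.6.1 of \cite{CKg0}, and it is where the hypothesis that the $\T$-fixed points of $\WmodG$ are isolated is used. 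Your step (1) (regularity at $z=0$ of a globally defined, non-localized bracket) and your identification of the geometric source of the shift $qe^{-zyL_\theta}$ are correct and are indeed the heart of the argument; it is only the identification of what the localization computes that needs to be redone.
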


\begin{proof}
The proof is identical to the proof of Lemma 7.6.1 of \cite{CKg0}.
\end{proof}

\subsection{Comparison of $S$-operators}

It is obvious from definitions that the stability condition $(\infty\cdot\theta_0,1)$ gives the usual moduli spaces of stable maps to $\WmodG$ (or to $\WmodG\times\PP^1$ for the graph spaces), 
hence the resulting theory is the Gromov-Witten theory of $\WmodG$.  We will simply write $(\infty,1)$ for this stability condition. This is justified, since the theory
is independent on the choice of $\theta_0$, as long as we stay in the same GIT chamber for the action of $\G$ on $W$.

\begin{Conj}\label{Main_Conj} Let $(\theta,\ke)$ be arbitrary, including all asymptotic cases. Then
\begin{enumerate}
\item  \[ \dsS_{\bf t}^{\theta, \ke} (\one ) = \dsS_{\tau ({\bf t})} ^{(\infty, 1)} (\one ) \]
with
\[ \tau ({\bf t}):= \kappa ({\bf t}) + \sum _{\beta \ne 0} q^\beta \sum_i \gamma _i \lla \gamma ^i , \one \rra _{0, 2, \beta}^{\theta, \ke}. \] 

\item  For ${\bf t}:=\sum_i t_i \tilde{\gamma}_i$, there are unique 
\begin{align*} P^{(\infty ,1), \theta, \ke}({\bf t}, z)=\one+O(q) & \in H^*_{\bT, \mathrm{loc}}(\WmodG)[z][[q, t_j]], \\
\tau ^{(\infty, 1), \theta, \ke}({\bf t})= \kappa ({\bf t}) +O(q) & \in H^*_{\bT, \mathrm{loc}}(\WmodG)[[q, t_j]] \end{align*} such that
\begin{equation} \label{mirror formula}
\dsS_{\bf t} ^{\theta, \ke}(z) (P^{\theta, \ke} ({\bf t}, z)) = \dsS_{\tau ^{(\infty, 1), \theta, \ke} ({\bf t})} ^{(\infty,1)} (z) (P^{(\infty , 1),\theta, \ke} ( \tau ^{(\infty, 1), \theta, \ke} ({\bf t}) , z ) ). \end{equation}
\end{enumerate}
\end{Conj}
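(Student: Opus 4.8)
The plan is to reduce Conjecture \ref{Main_Conj} to the polynomiality/wall-crossing machinery already developed for the unweighted quasimap theory in \cite{CKg0}, proceeding exactly as in the proof of the generalized Mirror Theorem there, but now threading the extra marking weights $\uke=(1,\dots,1,\ke,\dots,\ke)$ through every step. First I would establish part (1) as the $q^0$-plus-first-order shadow of part (2): the formula $\dsS_{\bf t}^{\theta,\ke}(\one)=\dsS_{\tau({\bf t})}^{(\infty,1)}(\one)$ follows by applying both sides of \eqref{mirror formula} to $\one$, using that $P^{\theta,\ke}({\bf t},z)=\one+O(q)$ and $P^{(\infty,1),\theta,\ke}=\one+O(q)$ together with the string-type identity that isolates the $z^{0}$-coefficient, so that the $P$-factors drop out after pairing. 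The identification $\tau({\bf t})=\kappa({\bf t})+\sum_{\beta\ne0}q^\beta\sum_i\gamma_i\lla\gamma^i,\one\rra_{0,2,\beta}^{\theta,\ke}$ comes from reading off the $O(1/z)$-free part of $\dsS_{\bf t}^{\theta,\ke}(z)(\one)$ using \eqref{S-op} and \eqref{Smodq}, exactly as in \cite[\S7]{CKg0}.

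For part (2), the strategy is the standard one: define the candidate $\dsS$-operator wall-crossing by means of the graph-space double bracket with two insertions weighted by $p_0$ and $p_\infty$, as in the proof of Proposition \ref{unitary}, and then deform the stability parameter. Concretely, I would introduce the $z$-polynomiality statement for the product $\dsS_\mu^{\theta,\ke}(q,{\bf t},z)\dsS_\mu^{\theta,\ke}(qe^{-zyL_\theta},{\bf t},-z)$ — which is the content of the Lemma just proved — and combine it with the analogous polynomiality for the $(\infty,1)$-theory. The key algebraic input is that both $\dsS_{\bf t}^{\theta,\ke}(z)(P^{\theta,\ke})$ and $\dsS_{\tau}^{(\infty,1)}(z)(P^{(\infty,1),\theta,\ke})$ lie on the same Lagrangian cone (this is essentially Conjecture \ref{Conj1}/Theorem \ref{main1} applied to both stability conditions, the $(\infty,1)$-case being literally Gromov-Witten theory), and two points on the overruled Lagrangian cone that agree modulo $z$ after the $\dsS$-dressing must be related by a unique change of variables $\tau^{(\infty,1),\theta,\ke}$ and a unique Birkhoff factor $P^{(\infty,1),\theta,\ke}$. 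Uniqueness of $P^{(\infty,1),\theta,\ke}=\one+O(q)$ and $\tau^{(\infty,1),\theta,\ke}=\kappa({\bf t})+O(q)$ follows from the recursive solvability of the wall-crossing equation order by order in the Novikov and $t_j$ variables, since the leading term of $\dsS^{(\infty,1)}(z)$ is $e^{\kappa({\bf t})/z}$ by the $\beta=0$ computation \eqref{Jmodq}–\eqref{Smodq}.

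The main obstacle I expect is not any single estimate but the bookkeeping needed to verify that the $\CC^*$-localization factorization on the weighted graph spaces $QG_{0,2|k,\beta}^{\theta,\ke}([W/\G])$ genuinely produces the same combinatorial recursion as in the unweighted case — i.e., that the extra $\ke$-weighted markings $y_j$, which are allowed to collide and to sit at base-points, do not alter the structure of the fixed loci $F_{k,\beta}$ beyond the replacements recorded in \S3.2 and in the identification $F_{k,\beta}\cong Q_{0,1|k}^{\theta,\ke}([W/\G],\beta)$ (resp.\ $F_\beta\times0^k$). This is where one must be careful about the two regimes $k\ke+\beta(L_\theta)>1$ versus $\le 1$, since the geometry of $F_{k,\beta}$ changes across that wall; but in each regime the virtual normal bundle and the evaluation map $\eb$ behave formally as in \cite{CKg0}, so the polynomiality argument goes through verbatim. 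Once that is in place, the proof of \eqref{mirror formula} is a line-by-line transcription of the argument for Theorem 1.3 (the generalized Mirror Theorem) of \cite{CKg0}, with $\ke$ carried along inertly, and I would simply indicate the dictionary rather than reproduce the computation.
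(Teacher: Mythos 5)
There is a genuine logical gap in your argument for part (2). You invoke ``the key algebraic input is that both $\dsS_{\bf t}^{\theta,\ke}(z)(P^{\theta,\ke})$ and $\dsS_{\tau}^{(\infty,1)}(z)(P^{(\infty,1),\theta,\ke})$ lie on the same Lagrangian cone (this is essentially Conjecture \ref{Conj1}/Theorem \ref{main1})''---but this is circular. The paper's logical architecture runs the other way: Proposition \ref{JSP} together with Conjecture \ref{Main_Conj}(2) \emph{implies} Conjecture \ref{Conj1} (the paper says so explicitly just before Theorem \ref{Comp_Thm}), and Theorem \ref{main1} is then deduced from Proposition \ref{JSP} and Theorem \ref{Comp_Thm}. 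You cannot assume the Lagrangian cone membership of $\dsS^{\theta,\ke}_{\bf t}(z)(P^{\theta,\ke})$ as an ingredient in proving the very wall-crossing statement from which that membership is derived. The actual proof (Theorem 7.3.4 of \cite{CKg0}, to which Theorem \ref{Comp_Thm} appeals) works intrinsically: it shows that the $\dsS$-series is \emph{characterized} by the polynomiality property of Lemma 4.7 together with the self-similarity recursion coming from $\CC^*$-localization residues at the isolated $1$-dimensional $\T$-orbits, and then constructs $P^{(\infty,1),\theta,\ke}$ and $\tau^{(\infty,1),\theta,\ke}$ order by order by matching those two characterizations across the two stability chambers. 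No Lagrangian cone input is used.

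A secondary issue: your derivation of part (1) from part (2) by ``applying both sides to $\one$ \ldots\ so that the $P$-factors drop out after pairing'' is not justified. The left side of \eqref{mirror formula} is $\dsS_{\bf t}^{\theta,\ke}(z)$ evaluated at $P^{\theta,\ke}$, not at $\one$, and nothing forces $P^{\theta,\ke}$ to cancel against $P^{(\infty,1),\theta,\ke}$ in a way that isolates $\dsS_{\bf t}^{\theta,\ke}(\one)$. Moreover, part (2) asserts existence and uniqueness of \emph{some} $\tau^{(\infty,1),\theta,\ke}$; it does not by itself identify that series with the explicit $\tau({\bf t})$ appearing in part (1). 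In the source \cite{CKg0}, the analogue of part (1) (their Theorem 7.3.1) is proved independently of and prior to the analogue of part (2) (their Theorem 7.3.4), and crucially under a weaker hypothesis---only isolated $\T$-fixed points, no condition on $1$-dimensional orbits. Treating (1) as a corollary of (2) would both break the order of dependence and lose the sharper hypothesis under which (1) holds. Your bookkeeping concern about the two regimes $k\ke+\beta(L_\theta)>1$ versus $\le 1$ for the fixed loci $F_{k,\beta}$ is well placed and is indeed the main point to be verified when transcribing the \cite{CKg0} argument; but the proof must be built directly on polynomiality and the localization recursion, not on the as-yet-unproved cone statement.
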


Just as in \cite[Lemma 6.4.1]{CKg0}, one can recursively construct uniquely determined series $P^{(\infty ,1), \theta, \ke}({\bf t}, z)$ and $\tau ^{(\infty, 1), \theta, \ke}({\bf t})$ with the required 
$q$-asymptotics, and which satisfy equation \eqref{mirror formula} {\it modulo} $1/z^2$. The content of part $(2)$ 
 of Conjecture \ref{Main_Conj} is that equality modulo $1/z^2$ suffices to force the equality to all orders in $1/z$.
Note that when 
 combined with Proposition \ref{JSP}, part $(2)$ implies Conjecture \ref{Conj1}.

\begin{Thm}\label{Comp_Thm}
Suppose that the induced $\T$-action on $\WmodG$ has only isolated $\T$-fixed points. Then Conjecture \ref{Main_Conj} (1) holds true.

Further, if in addition $\WmodG$ has only isolated 1-dimensional $\T$-orbits, then
Conjecture \ref{Main_Conj} (2) holds true.
\end{Thm}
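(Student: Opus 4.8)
The plan is to follow the strategy of \cite{CKg0} closely, exploiting the torus localization on the graph spaces $QG^{\theta,\ke}_{0,2|k,\beta}([W/\G])$. First I would prove part (1). The key idea is to compare the $\dsS$-operator for an arbitrary stability $(\theta,\ke)$ with the one for ordinary Gromov--Witten theory $(\infty,1)$ by a double-localization argument: localize the graph-space bracket $\lla \sigma_1\ot p_0,\sigma_2\ot p_\infty\rra^{QG^{\theta,\ke}}_{0,2}$ simultaneously with respect to the scaling $\CC^*$ on $\PP^1$ and with respect to $\bT$. Since $\bT$ acts on $\WmodG$ with only isolated fixed points, the $\bT$-fixed loci in the graph space are themselves graph spaces over points, and one can identify the $\bT$-fixed contributions on both sides; the discrepancy between the two stability conditions is exactly absorbed by the shift ${\bf t}\mapsto\tau({\bf t})$, where $\tau({\bf t})=\kappa({\bf t})+\sum_{\beta\neq 0}q^\beta\sum_i\gamma_i\lla\gamma^i,\one\rra^{\theta,\ke}_{0,2,\beta}$. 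Concretely one checks, as in \cite[\S6]{CKg0}, that applying $\dsS^{\theta,\ke}_{\bf t}$ to $\one$ and applying $\dsS^{(\infty,1)}_{\tau({\bf t})}$ to $\one$ produce the same element of $H^*_{\bT,\mathrm{loc}}(\WmodG)$, because both compute, after $\bT$-localization, the same sum of rational functions attached to chains of $\CC^*$-fixed quasimaps/maps with the insertion $\one$ at the free end and $\delta_\mu$ at $0$. The string and dilaton-type equations together with the quasimap wall-crossing/stability comparison of Proposition \ref{properties} handle the bookkeeping of how the $(\theta,\ke)$-legs reassemble into $(\infty,1)$-legs carrying the shifted parameter.

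Next I would turn to part (2), which is the substantive half and requires the extra hypothesis that $\WmodG$ has only isolated one-dimensional $\bT$-orbits. By the discussion following Conjecture \ref{Main_Conj} (the analog of \cite[Lemma 6.4.1]{CKg0}) there already exist unique series $P^{(\infty,1),\theta,\ke}$ and $\tau^{(\infty,1),\theta,\ke}$ with the prescribed $q$-asymptotics satisfying \eqref{mirror formula} modulo $1/z^2$. So the real task is a rigidity statement: equality mod $1/z^2$ forces equality to all orders. The mechanism, exactly as in \cite[\S7]{CKg0}, is the polynomiality Lemma just proved, $\dsS^{\theta,\ke}_\mu(q,{\bf t},z)\dsS^{\theta,\ke}_\mu(qe^{-zyL_\theta},{\bf t},-z)$ has no pole at $z=0$, combined with the restriction of everything to $\bT$-fixed points $\mu$. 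Under the isolated-fixed-points-and-orbits hypothesis, $H^*_{\bT,\mathrm{loc}}(\WmodG)$ has a basis of fixed-point classes $\delta_\mu$, and the graph connecting fixed points by one-dimensional orbits (the moment graph) is well understood; one can then run the Givental-style argument showing that a family on the Lagrangian cone is determined by its ``tangent'' data mod $1/z^2$ once one knows that both sides satisfy the same polynomiality/recursion relations encoded by this graph. The key step is to set up the difference of the two sides of \eqref{mirror formula}, restrict to each $\mu$, use Proposition \ref{unitary} to rewrite things in terms of $P$-series, and then show by induction on the order in $1/z$ (using the no-pole-at-$z=0$ statement to cancel would-be poles) that each coefficient vanishes.

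The main obstacle, and the place where the isolated one-dimensional orbit hypothesis is genuinely used, is the second part: establishing the rigidity from the mod-$1/z^2$ equality. This requires that the recursion relations coming from $\CC^*$-localization on the graph space (the relations among fixed-point restrictions $\dsS^{\theta,\ke}_\mu$ indexed by edges of the moment graph) together with the polynomiality lemma be strong enough to pin down a solution uniquely. In the toric or near-toric setting of \cite{CKg0} this is a known but delicate argument, and the only thing I must verify here is that nothing in the passage from an honest GIT quotient with a nice torus action to the present ``big'' setup (the extra $[\CC/\CC^*]^k$ factors producing the weighted markings $y_j$, and the use of $\hat{ev}_j$ rather than honest evaluation maps) breaks the localization bookkeeping. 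Since the weighted markings only contribute the factors $\hat{ev}_j^*({\bf t})$ and, after localizing, sit over $\bT$-fixed points just like ordinary markings, the same proof goes through essentially verbatim. I would therefore structure the write-up as: (a) reduce to $\bT$-fixed-point restrictions; (b) prove (1) by the double-localization/shift identification; (c) invoke the polynomiality lemma and the moment-graph recursion to upgrade the mod-$1/z^2$ solution to an exact one, proving (2); and (d) remark that the presence of $E$-twisting and of the weighted legs changes nothing, referring to the parallel steps in \cite{CKg0}.
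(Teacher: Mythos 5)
Your proposal matches the paper's approach: the paper's proof of Theorem \ref{Comp_Thm} is a one-line reference declaring the proofs identical to those of Theorems 7.3.1 and 7.3.4 of \cite{CKg0}, and your outline is a faithful summary of exactly those arguments (double $\bT\times\CC^*$-localization and the shift $\tau(\bf t)$ for part (1); the polynomiality lemma, moment-graph recursion, and rigidity from equality mod $1/z^2$ for part (2)), together with the correct observation that the weighted legs and $\hat{ev}_j$ do not disturb the localization bookkeeping.
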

\begin{proof}
The proof of the first statement  is identical with the proof of Theorem 7.3.1 of \cite{CKg0},
while the proof of the second statement is identical with the proof of Theorem 7.3.4 of \cite{CKg0}.
\end{proof}

Now the proof of Theorem \ref{main1} follows from Proposition \ref{JSP} and Theorem \ref{Comp_Thm}.

\subsection{Non-equivariant limit} If $\WmodG$ is projective then one can work with the {\it non-localized} equivariant cohomology ring $H^*_{\T}(\WmodG,\QQ)$, the Poincar\'e pairing with
values in $\QQ[\lambda_1,\dots ,\lambda_r]$, and the Novikov ring $\Lambda_{\T}$. The objects $\dsJ^{\theta, \ke}$, $\dsS_{\bf t}^{\theta, \ke}$, $P^{\theta, \ke}$, $\tau ({\bf t})$,
$P^{(\infty ,1), \theta, \ke} $, and $\tau ^{(\infty, 1), \theta, \ke}$ reduce to their non-equivariant counterparts upon setting $\lambda_1=\dots =\lambda_r=0$.

%%%%%%%%%%%%%%%%%%%%%%%%%%%%%%%%%%%%%%%%%%%%%%%%%%
%%%%%%%%%%%%%%%%%%%%%%%%%%%%%%%%%%%%%%%%%%%%%%%%%%
%%%%%
%%%%%
%%%%%
%%%%%
%%%%%
%%%%%
%%%%%
%%%%%
%%%%%
%%%%%

\section{Explicit Formula for the fully asymptotic stability condition}

\subsection{$\dsI$-function} Other than the Gromov-Witten chamber $(\theta,\ke) =(\infty, 1)$, the most interesting case from a computational viewpoint is 
the opposite asymptotic case $(\theta,\ke) =(0+, 0+)$ (again, the theory is independent on the choice of character in a given GIT chamber, so we drop $\theta_0$ from the notation). 
The main reason is that $QG_{0,0|k, \beta}^{0+ ,0+}([W/\G]  )$ 
is isomorphic to  \[ QG_{0,0,\beta}^{0+ ,0+}([W/\G]) \times (\PP ^1)^k. \]
The space $QG_{0,0,\beta}^{0+ ,0+}([W/\G]) $ coincides
with  $\mathrm{Qmap}_{0, 0}(\WmodtG , \beta; \PP^1)$ defined in \cite[\S 7.2]{CKM}
and was denoted by $QG_{0,0, \beta}(\WmodtG )$ in \cite[\S 2.6]{CKg0}. Further, as we already noted earlier
\[ F_{k,\beta}=F_{\beta}\times 0^k,\]
where $F_\beta=F_{0,\beta}$ is the distinguished $\CC^*$-fixed locus in $QG_{0,0,\beta}^{0+ ,0+}([W/\G]) $. 

Denote \begin{equation}\dsI =\dsI _{\WmodtG}(q,{\bf t},z) := \dsJ^{0+ , 0+}(q,{\bf t},z).\end{equation}
In this paper we will call $\dsI _{\WmodtG}(q,{\bf t},z)$ {\it the big $\dsI$-function of} $\WmodtG$. This differs from the terminology in \cite{CKM,CKg0}.
The specialization
$$\dsI _{sm} (q, z):= \dsI (q, 0, z)=\sum _{\beta} q^\beta \dsI _{\beta} (z)$$
is called the small $\dsI$-function of $\WmodtG$ (this terminology {\it does} agree with the one in \cite{CKg0, CKg}).
We have
\begin{equation}\label{I-res} \dsI _{\beta} (z) =     (ev _\bullet )_* \mathrm{Res}_{F_\beta} [QG_{0,0,\beta}(\WmodtG )] ^{\vir}. \end{equation}
As is well-known, these push-forwards of residues can often be 
explicitly calculated in closed form. For example, the case of toric varieties goes back to Givental, \cite{Givental}, see also \cite[\S7.2]{CK} for an exposition. Type $A$ flag varieties, which are
examples with non-abelian $\G$, are treated in \cite{BCK1, BCK2}. For more on the non-abelian case, see the forthcoming note \cite{CKabelianization}.

The goal of this section is to find explicit formulas for the big $\dsI$-functions for some $(W, \G, \theta )$.
To emphasize the role of class $\beta$, we write
$$\hat{ev}_{\beta}=\hat{ev}_j : F_\beta \ra [W/\G] .$$ Note that these evaluation maps do not depend on the choice of $j$ since 
all marked points  are concentrated on $0\in \PP ^1$.
It follows that
$$\dsI(q,{\bf t},z ) = \sum _\beta q^\beta (ev_{\bullet})_*( \exp (\hat{ev}^*_\beta ({\bf t})/z  ) \cap \mathrm{Res}_{F_\beta} [QG_{0,0,\beta}(\WmodtG )] ^{\vir}  ).$$

%Let $\tilde{\gamma}_i $ denote a lift  of $\gamma _i$ in $H^*([W/\G])$ under the Kirwan homomorphism $\kappa: H^*([W/\G]) \ra H^*(\WmodtG )$.
Suppose that for some $\gamma _{i,\beta} (z) \in H^*(\WmodtG)\ot \QQ[z]$,
\begin{equation}\nonumber  (ev_\bullet)^*\gamma _{i,\beta}(z) =(\hat{ev}_{\beta} ^* (\tilde{\kg} _i)) .  \end{equation} 

Then by the projection formula, the big $\dsI$-function becomes
\begin{equation}\label{big-explicit}  \sum _{\beta} e^{\sum _i t_i \gamma _{i, \beta} (z) / z}    q^{\beta} \dsI_\beta ( z)        \ \ 
\text{ for }   \ \   {\bf t} = \sum t_i \tilde{\gamma }_i .\end{equation}

Whenever the small $I$-function is known, to obtain an explicit formula for $\dsI$ it remains to find explicitly such classes $\gamma _{i,\beta} (z)$.

\begin{Rmk} By Theorem \ref{main1}, the big $\dsI$-function \eqref{big-explicit} is on the Lagrangian cone of the Gromov-Witten theory of $\WmodtG$ whenever the $\T$ action has isolated fixed points
and isolated $1$-dimensional orbits. This statement is presumably related to Woodward's result in \cite[Theorem 1.6]{Woodward}.
\end{Rmk}

\subsection{Description of $ev$}

Let $A(W)$ be the affine coordinate ring of $W$ and let $\zeta_0, \zeta_1$ be 
the homogeneous coordinates  of $\PP ^1$ defining $0\in \PP ^1$ by the equation $\zeta_0=0$.

For a sufficiently large and divisible integer $m$, the character $m\theta$ defines a morphism 
\[ \iota: [W/\G] \ra [\CC ^{N+1} / \CC ^*]_{A(W)^\G} := [\Spec (A(W)^{\G}) \times \CC ^{N+1} / \CC ^*] \]  whose restriction 
$\WmodG \ra \PP ^N_{A(W)^{\G}} $
is an embedding (see \cite[\S 3.1]{CKg0}). Let $d:=\beta (L_{m\theta})$. Recall that
$$QG_{0,0,d}(\CC ^{N+1} /\!\!/_{\mathrm{id}} \CC ^* ) = \PP (\Sym ^d ((\CC ^2)^\vee)\ot \CC ^{N+1}) ),$$
and that its $\CC ^*$-fixed distinguished part $F_d$ is $\PP ((\CC\cdot \zeta_0^d)\ot \CC ^{N+1})=\PP ^N$ (see \cite{Givental-equiv}).

Consider now the following natural diagram
\[ \xymatrix{ 
          &    F_\beta \ar'[d][dd]^{\iota}   \ar[rd]^{ev_\bullet}     &      \\
          \PP ^1 \times F_\beta \ar[rr]_{\ \ \ \ \ \ \ \ ev} \ar[dd]^{\iota}    \ar[ur]^{\pi}  &  & [W/\G] \ar[dd]^{\iota}\\
                       & \PP ^N_{A(W)^{\G}}  \ar[dr]^{\uev_\bullet} &       \\
                       \PP ^1 \times \PP ^N _{A(W) ^{\G}} \ar[rr]_{\uev} \ar[ur]^{\upi} &    &  [\CC ^{N+1} / \CC ^*]_{A(W)^\G} 
                       } \] 
where 
\begin{itemize}
\item the vertical morphisms are induced from $\iota$ (abusing notation, we denote all of them also by $\iota$);
\item  $ev, \underline{ev}$ are the universal evaluation maps; 
\item $ev_\bullet, \underline{ev}_\bullet$ are evaluation maps
at the generic point of $\PP ^1$; 
\item $\pi, \underline{\pi}$ are projections. 
\end{itemize}
All side square faces are commutative but the upper and the lower triangle faces need not be commutative. 

Let $w_0, ..., w_N$ be the homogeneous coordinates of $\PP ^N$. On the stack quotient $[\CC ^{N+1} / \CC ^*]_{A(W)^\G}$
we have  the invertible sheaf  $\mathcal{O}_{[\CC ^{N+1} / \CC ^*]_{A(W)^\G}}(1)$ attached to
the character $\mathrm{id}$. Let
$\CC _{nz}$ denote the $\CC^*$-representation space given by the character $nz=n\mathrm{id}$.

The map $\underline{ev}$ is defined by the line bundle $\mathcal{O}_{\PP ^1}(d)\boxtimes \mathcal{O}_{\PP ^N_{A(W)^\G}} (1)$
together with sections $\zeta_0^d\boxtimes w_i$, $i=0, ..., N$. Therefore as $\CC^*$-equivariant coherent sheaves
\begin{align} \nonumber  \underline{ev}^*(\mathcal{O} _{ [\CC ^{N+1} / \CC ^*]_{A(W)^\G}} (1))  & = \mathcal{O}_{\PP ^1}(d)\boxtimes \mathcal{O}_{\PP ^N_{A(W)^\G}} (1) \\
 &=  \mathcal{O}_{\PP ^1}(d)\boxtimes  \underline{ev} _\bullet ^*\mathcal{O} _{\PP ^N_{A(W)^\G}}  (1), \label{non-com}
\end{align} 
where  $\mathcal{O}_{\PP ^1} (d) |_0 = \CC _{dz}$ and $ \underline{ev} _\bullet ^*\mathcal{O} _{\PP ^N_{A(W)^\G}}  (1)$
has the trivial $\CC^*$-equivariant structure.

\begin{Lemma}\label{some-lemma} The following equality holds in $\Pic _{\CC ^*}(F_\beta)_{\QQ}$:
\[ \hat{ev}_\beta^* (L_\theta) = ev_\bullet^* (L_\theta ) \boxtimes \CC_{\beta(L_\theta)z}  ,\]
where the $\CC^*$-action on $ev_\bullet^* (L_\theta )$ is trivial.
\end{Lemma}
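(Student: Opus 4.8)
The plan is to reduce to an integral character and to read both sides of the asserted equality off the diagram displayed above. First I would fix a positive integer $m$, large and divisible enough that $m\theta\in\chi(\G)$ and the morphism $\iota$ (hence the closed embedding $\WmodG\hookrightarrow\PP ^N_{A(W)^{\G}}$) is defined; set $d:=\beta(L_{m\theta})=m\,\beta(L_\theta)$ and abbreviate $\mathcal{O}(1):=\mathcal{O}_{[\CC ^{N+1}/\CC ^*]_{A(W)^\G}}(1)$. Since $L_\theta=L_{m\theta}^{\ot 1/m}$ in $\Pic([W/\G])_\QQ$, it suffices to prove the identity of honest $\CC ^*$-equivariant line bundles
\[ \hat{ev}_\beta^*(L_{m\theta}) \;=\; ev_\bullet^*(L_{m\theta})\ot \CC _{dz} \quad\text{in } \Pic _{\CC ^*}(F_\beta), \]
with trivial $\CC ^*$-structure on $ev_\bullet^*(L_{m\theta})$, and then to divide by $m$ in $\Pic _{\CC ^*}(F_\beta)_\QQ$.

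The heart of the matter is to compute the $\CC ^*$-equivariant line bundle $ev^*(L_{m\theta})$ on the universal curve $\PP ^1\times F_\beta$. Since $\iota$ is the morphism attached to the character $m\theta$, we have $\iota^*\mathcal{O}(1)=L_{m\theta}$ on $[W/\G]$, necessarily with trivial $\CC ^*$-structure; pulling back along the $\CC ^*$-equivariant map $ev$ then gives $ev^*(L_{m\theta})=ev^*\iota^*\mathcal{O}(1)$, and by commutativity of the side face $\iota\circ ev=\uev\circ\iota$ this equals $\iota^*\uev^*\mathcal{O}(1)$. Next, \eqref{non-com} identifies $\uev^*\mathcal{O}(1)=\mathcal{O}_{\PP ^1}(d)\boxtimes \uev_\bullet^*\mathcal{O}_{\PP ^N_{A(W)^{\G}}}(1)$ as $\CC ^*$-equivariant sheaves, with $\mathcal{O}_{\PP ^1}(d)|_0=\CC _{dz}$ and trivial $\CC ^*$-structure on the second factor. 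Because the relevant vertical morphism is $\id_{\PP ^1}\times\iota$ and the side face $\uev_\bullet\circ\iota=\iota\circ ev_\bullet$ commutes, pulling back produces
\[ ev^*(L_{m\theta}) \;=\; \mathcal{O}_{\PP ^1}(d)\boxtimes ev_\bullet^*\iota^*\mathcal{O}(1) \;=\; \mathcal{O}_{\PP ^1}(d)\boxtimes ev_\bullet^*(L_{m\theta}) \]
as $\CC ^*$-equivariant line bundles on $\PP ^1\times F_\beta$, the second factor carrying the trivial $\CC ^*$-structure since $ev_\bullet:F_\beta\ra\WmodtG$ is $\CC ^*$-equivariant for the trivial actions on source and target.

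To conclude I would restrict this identity along the section of the projection $\pi$ over $0\in\PP ^1$. That section is $\CC ^*$-equivariant, and its composition with the universal map is by construction $\hat{ev}_\beta$ (the value of the universal quasimap at $0$); restricting $\mathcal{O}_{\PP ^1}(d)\boxtimes ev_\bullet^*(L_{m\theta})$ along it returns $\mathcal{O}_{\PP ^1}(d)|_0\ot ev_\bullet^*(L_{m\theta})=\CC _{dz}\ot ev_\bullet^*(L_{m\theta})$. Hence $\hat{ev}_\beta^*(L_{m\theta})=\CC _{dz}\ot ev_\bullet^*(L_{m\theta})$, and since $d=m\,\beta(L_\theta)$, dividing by $m$ in $\Pic _{\CC ^*}(F_\beta)_\QQ$ yields $\hat{ev}_\beta^*(L_\theta)=ev_\bullet^*(L_\theta)\boxtimes \CC _{\beta(L_\theta)z}$, as claimed. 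I expect the only delicate point to be the careful tracking of the $\CC ^*$-linearizations: one must verify that the whole equivariant twist sits in the factor $\mathcal{O}_{\PP ^1}(d)$ produced by \eqref{non-com} and that neither $ev_\bullet$ nor $\iota$ on the base schemes contributes a character. The fact that the two triangular faces of the diagram need not commute is exactly the manifestation of this twist, and once it is localized through \eqref{non-com} the remaining steps are formal.
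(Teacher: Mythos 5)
Your argument is correct and is essentially the same as the paper's (very terse) proof: you pull back \eqref{non-com} along the vertical maps $\iota$, use the commutativity of the side faces together with $\iota^*\mathcal{O}(1)=L_{\theta}^{\ot m}$ to obtain $ev^*(L_\theta^{\ot m})=\mathcal{O}_{\PP^1}(d)\boxtimes ev_\bullet^*(L_\theta^{\ot m})$, then restrict to the $\CC^*$-fixed section over $0\in\PP^1$ and divide by $m$. The only added value is that you spell out the restriction-at-$0$ step and the rational/integral reduction, which the paper leaves implicit.
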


\begin{proof}  We take  $\iota ^*$ on \eqref{non-com} and use $\iota ^* \mathcal{O} _{\PP ^N_{A(W)^\G}}  (1) = L^{\ot m}_\theta $ to conclude the proof.
\end{proof}

\begin{Rmk}\label{some-remark}
Let $\theta'$ be another character in the same GIT chamber as $\theta$. Since the moduli spaces of weighted stable quasimaps for 
the $((0+)\cdot\theta,0+)$ and $((0+)\cdot\theta',0+)$ stability conditions also coincide, we conclude that Lemma \ref{some-lemma} also applies to $L_{\theta'}$. If the GIT chamber 
has dimension equal to the rank of the group of rational characters $\chi(\G)\otimes\QQ$, then it contains a basis of $\chi(\G)\otimes\QQ$ and therefore Lemma \ref{some-lemma}
holds for any character of $\G$ up to torsion.
\end{Rmk}

\subsection{Examples} If $W$ is a vector space and $\G\cong(\CC^*)^s$ is a torus, so that $\WmodtG$ is a nonsingular toric variety, then $H^*([W/\G])$ is a polynomial algebra over $\QQ$,
with generators $c_1(L_{\eta_i})$
corresponding to a $\QQ$-basis $\{\eta_1,\dots, \eta_s\}$ of $\chi(\G)\otimes\QQ$. By Remark \ref{some-remark}, for any polynomial $p(c_1(L_{\eta_1}),\dots,c_1(L_{\eta_s}))$ we have
$$ \hat{ev}_\beta^*p(c_1(L_{\eta_1}),\dots,c_1(L_{\eta_s}))=ev_\bullet^*p(c_1(L_{\eta_1})+\beta(L_{\eta_1})z,\dots,c_1(L_{\eta_s})+\beta(L_{\eta_s})z).$$
In particular, the classes $\gamma_{i,\beta}(z)$, and therefore the big $\dsI$-functions, are explicitly known for toric varieties. By considering twisted theories, the same is true for complete
intersections in toric varieties as well.
We exemplify with the case of projective spaces.

Let $H$ denote the hyperplane class of $\PP ^n = \CC ^{n+1}/\!\!/\CC ^*$. In this case,
applying Lemma \ref{some-lemma} to \eqref{big-explicit} and using the formula for its small $\dsI$-function from \cite{Givental-equiv}, we obtain
\[ \dsI _{\CC ^{n+1}/\!\!/\CC ^*} (q,{\bf t},z) = \sum _{d=0}^{\infty} q^d \frac{\exp (\sum _{i=0}^nt_i (H+dz)^i/z) }{\prod _{k=1}^d (H+kz) ^{n+1}}.  \] 
By the non-equivariant specialization of Theorem \ref{main1}, $\dsI _{\CC ^{n+1}/\!\!/\CC ^*} (q,{\bf t},z)$ is on the Lagrangian cone of the Gromov-Witten theory of $\PP ^n$.

More generally, let $E=\CC$ with weight a positive integer $l$ be the twisting factor, so
that $\uE |_{\PP ^n} = \mathcal{O}_{\PP ^n}(l)$. With this setting, 
\begin{align}\label{I_Hyper_P} 
\dsI  ^{E} _{\CC ^{n+1}/\!\!/\CC ^*}({\bf t}) = \sum _{d=0}^{\infty} q^d \frac{\exp (\sum _{i=0}^nt_i (H+dz)^i/z) }{\prod _{k=1}^d (H+kz) ^{n+1}}  \prod _{k=0}^{ld}  (lH + kz) . \end{align}
By Theorem \ref{main1}, the Gromov-Witten $E$-twisted $J$-function of $\PP ^n$ is
related with $\dsI  ^{E} _{\CC ^{n+1}/\!\!/\CC ^*}$ via a Birkhoff factorization (see \cite{CG} for the Birkhoff factorization procedure). Recall that the $E$-twisted $J$-function is essentially the
usual $J$-function of a hypersurface of degree $l$ in $\PP^n$.

Note that RHS of \eqref{I_Hyper_P} can be expressed as
 \[  \left(\exp (\sum _{i=0}^n \frac{t_i}{z} (z q\frac{\partial}{\partial q} + H)^i  )\right)  \dsI  ^{E} _{\CC ^{n+1}/\!\!/\CC ^*}(q, 0, z) .  \] 
 This latter expression is already considered as a special case by Iritani in  \cite[Example 4.14]{D_Module_Iritani} for 
 a reconstruction of  quantum D-modules.

\begin{Rmk}
Recent work by Coates, Corti, Iritani, and Tseng in \cite{CCIT1, CCIT2} introduces the so-called
$S$-extended $I$-function of a toric DM stack $\mathcal{X}=[(\CC^N)^{ss}(\theta)/(\CC^*)^r]$ and proves that it lies on the Lagrangian cone of the 
Gromov-Witten theory of $\mathcal{X}$. In examples, see \cite{CCIT2}, by choosing the extending set $S$ carefully, one can extract
sufficient information from the $S$-extended $I$-function to recover the big $J$-function of $\mathcal{X}$.

From the perspective of our paper (generalized to orbifold GIT targets in \cite{CCK}), the $S$-extension amounts to changing the GIT presentation of the toric
target  $\mathcal{X}$ as $[(\CC^{N+|S|})^{ss}(\theta')/(\CC^*)^{r+|S|}]$, and the $S$-extended $I$-function of \cite{CCIT1, CCIT2} coincides with the big $\mathds{I}$-function of ours (corresponding
to the {\it new} GIT presentation) 
{\em restricted} to ${\bf t} = \sum t_i \tilde{\gamma} _i$ with $\gamma _i \in H^{\le 2}(\mathcal{X})$. The additional parameters of the $S$-extended $I$-function of \cite{CCIT1} are identified with the additional ``ghost" Novikov variables (see \cite[\S5.9.2]{CKg}) of the quasimap theory for the new GIT presentation. 

Put it differently, the $S$-extended $I$-function of \cite{CCIT1} is exactly {\it Givental's  small $I$-function} for the quasimap theory of $(\CC^{N+|S|}, (\CC^*)^{r+|S|},\theta')$, as defined e.g., in 
equation $(7.3.2)$ of \cite{CKM} for the manifold case.
\end{Rmk}

%%%%%%%%%%
%%%%%%%%%%
%%%%%%%%%%
%%%%%%%%%%
%%%%%%%%%%
%%%%%%%%%%
%%%%%%%%%%
%%%%%%%%%%
%%%%%%%%%%
%%%%%%%%%%
%%%%%%%%%%
%%%%%%%%%%
%%%%%%%%%%
%%%%%%%%%%

\end{document}